\def\setliststart#1{\setcounter{\@listctr}{#1}%
  \addtocounter{\@listctr}{-1}}
\newtheorem{The}{Theorem}[section]
\newtheorem{Lem}[The]{Lemma}
\newtheorem{Pro}[The]{Proposition}
\theoremstyle{definition}
\theoremstyle{remark}
\newtheorem{Rem}[The]{Remark}
\numberwithin{equation}{section}
\newcommand{\T}{\mathbb{T}}
\newcommand{\R}{\mathbb{R}}
\newcommand{\N}{\mathbb{N}}
\newcommand{\SING}{\mbox{\rm Sing}\,(u)}
\newcommand{\CUT}{\mbox{\rm Cut}\,(u)}
\newcommand{\BSING}{\overline{\SING}}
\newcommand{\IU}{\mathcal{I}(u)}
\title[On and beyond propagation of singularities]{On and beyond propagation of singularities of viscosity solutions}
\author{Piermarco Cannarsa \and Wei Cheng}
\address{Dipartimento di Matematica, Universit\`a di Roma ``Tor Vergata'',
Via della Ricerca Scientifica 1, 00133 Roma, Italy}
\email{cannarsa@mat.uniroma2.it}
\address{Department of Mathematics, Nanjing University, Nanjing 210093, China}
\email{chengwei@nju.edu.cn}
\date{\today}
\subjclass[2010]{35F21, 49L25, 37J50}
\keywords{Hamilton-Jacobi equation, weak KAM theory, propagation of singularities.}
\begin{document}

\begin{abstract}
	This is a survey paper for the recent results on and beyond propagation of singularities of viscosity solutions. We also collect some open problems in this topic.
\end{abstract}

\maketitle

\section{Introduction}

It is commonly accepted that, Hamilton-Jacobi equations in the form
\begin{equation}\label{eq:static_intro}
	H(x,Du(x))=c,\quad (x\in M)
\end{equation}
or
\begin{equation}\label{eq:evolutionary_intro}
	D_tu(t,x)+H(x,D_xu(t,x))=0\quad(x\in M)
\end{equation}
plays an important role in many fields such as PDEs, calculus of variation and optimal control, Hamiltonian dynamical systems and Riemannian geometry. Here $M$ is a smooth manifold without boundary and $c\in\R$ is the so called Ma\~n\'e critical value. The notion of viscosity solutions of Hamilton-Jacobi equations, introduced in the seminal papers \cite{Crandall_Lions1983} and \cite{Crandall_Evans_Lions1984}, provides the right class of generalized solutions to study existence, uniqueness, and stability issues  under wide classes of boundary-initial conditions.

The study of the propagation of singularities of viscosity solutions is a kind of finer  analysis of the associated problem of calculus of variations and optimal controls. We suppose that $H$ in \eqref{eq:static_intro} is a Tonelli-like Hamiltonian. Since any viscosity solution $u$ of \eqref{eq:static_intro} is locally semiconcave (with linear modulus), we denote by $D^+u(x)$ the superdifferential of $u$ at $x$. We say $x\in M$ a singular point of $u$ if $D^+u(x)$ is not a singleton. The set of all singular points of $u$ is denoted by $\SING$.


A specific approach to the problem on the propagation of singularities was developed in \cite{Albano_Cannarsa2002} by solving the generalized characteristic inclusion
\begin{equation}\label{eq:gc_intro}
	\dot{\mathbf{x}}(s)\in\mathrm{co}\, H_p\big(\mathbf{x}(s),D^+u(\mathbf{x}(s))\big),\quad \text{a.e.}\;s\in[0,\tau]\,.
\end{equation}
More precisely, if the initial point $x_0$ belongs to $\SING$, and is not a critical point of $u$ with respect to $H$, i.e.,
$$
0\not\in\mathrm{co} \,H_p(x_0,D^+u(x_0))\,,
$$
then it was proved in \cite{Albano_Cannarsa2002} that there exists a nonconstant singular arc $\mathbf{x}$ from $x_0$ which is a generalized characteristic.

The study of the local propagation of singularities along generalized characteristics was later refined in \cite{Yu2006} and \cite{Cannarsa_Yu2009}. For weak KAM solutions, local propagation results were obtained in \cite{Cannarsa_Cheng_Zhang2014}. An interesting interpretation of the above singular curves as part of the flow of fluid particles has been recently proposed in \cite{Khanin_Sobolevski2016}  (see also \cite{Stromberg_Ahmadzadeh2014} for related results).

For many reasons, we will pay more attention to the global propagation results, i.e., if the initial point $x_0$ belongs to $\SING$, any solution $\mathbf{x}$ of \eqref{eq:gc_intro} can be extended to $(0,+\infty]$ such that $\mathbf{x}(t)\in\SING$ for all $t>0$. Except for some global propagation results by energy-estimates methods (see, for instance, \cite{ACNS2013,Cannarsa_Mazzola_Sinestrari2015,Albano2016_1}), for weak KAM solutions of equation \eqref{eq:weak_KAM} on the whole space, a global propagation result was obtained in \cite{Cannarsa_Cheng3} by an intrinsic approach under much more general conditions.

Indeed, in \cite{Cannarsa_Cheng3}, the problem was solved by using the positive type Lax-Oleinik semigroup which gives an intrinsic explanation of the propagation of singularities only according to the associated system of characteristics. Later, in \cite{Cannarsa_Cheng_Fathi2017}, the method was applied to obtain  topological results for $\CUT$, the cut locus with respect to $u$, and $\SING$ such as the homotopy equivalence between the complement of the projected Aubry set of $u$ and $\CUT$ or $\SING$, and the local path-connectedness of $\CUT$ and $\SING$. Recently, in \cite{Cannarsa_Chen_Cheng2018}, for mechanical systems on the torus, the relation of the $\omega$-limit sets of the semi-dynamics of generalized characteristics with the projected Aubry sets in weak KAM theory was exposed.

In this paper, we will review some recent results on the global propagation and the applications based the intrinsic method above. We also attempt to raise some open problems related to this topic, from analytic, topological, geometric and dynamic points of view respectively.

The paper is organized as follows. In section 2, we review some basic materials for our results. In section 3, some recent results are discussed. The last section is mainly composed of some open problems with explanations.

\medskip

\noindent{\bf Acknowledgments} This work was partially supported by the Natural Scientific Foundation of China (Grant No. 11631006 and No.11790272) and the Istituto Nazionale di Alta Matematica ``Francesco Severi'' (GNAMPA 2017 Research Projects). The authors acknowledge the MIUR Excellence Department Project awarded to the Department of Mathematics, University of Rome Tor Vergata, CUP E83C18000100006.
\section{Preliminaries}

In this section, we want to give a brief review of the Tonelli theory in calculus of variations, weak KAM theory, and basic facts from non-smooth analysis we need. 

\subsection{Semiconcave functions}

Let $\Omega\subset\R^n$ be a convex set. We recall that a function $u:\Omega\rightarrow\R$ is said to be {\em semiconcave} (with linear modulus) if there exists a constant $C>0$ such that
\begin{equation}\label{eq:SCC}
\lambda u(x)+(1-\lambda)u(y)-u(\lambda x+(1-\lambda)y)\leqslant\frac C2\lambda(1-\lambda)|x-y|^2
\end{equation}
for any $x,y\in\Omega$ and $\lambda\in[0,1]$.  Any constant $C$ that satisfies the above inequality  is called a {\em semiconcavity constant} for $u$ in $\Omega$. A function $u:\Omega\rightarrow\R$ is said to be {\em semiconvex} if $-u$ is semiconcave.

%

We recall that a function $u:\Omega\rightarrow\R$ is said to be {\em locally semiconcave} (resp. {\em locally semiconvex}) if for each $x\in\Omega$ there exists an open ball $B(x,r)\subset\Omega$ such that $u$ is a semiconcave (resp. semiconvex) function on $B(x,r)$.

Let $u:\Omega\subset\R^n\to\R$ be a continuous function. We recall that, for any $x\in\Omega$, the closed convex sets
\begin{align*}
D^-u(x)&=\left\{p\in\R^n:\liminf_{y\to x}\frac{u(y)-u(x)-\langle p,y-x\rangle}{|y-x|}\geqslant 0\right\},\\
D^+u(x)&=\left\{p\in\R^n:\limsup_{y\to x}\frac{u(y)-u(x)-\langle p,y-x\rangle}{|y-x|}\leqslant 0\right\}.
\end{align*}
are called the {\em (Dini) subdifferential} and {\em superdifferential} of $u$ at $x$, respectively.

Let now $u:\Omega\to\R$ be locally Lipschitz. We recall that a vector $p\in\R^n$ is said to be a {\em reachable} (or {\em limiting}) {\em gradient} of $u$ at $x$ if there exists a sequence $\{x_n\}\subset\Omega\setminus\{x\}$, converging to $x$, such that $u$ is differentiable at $x_k$ for each $k\in\N$ and
$$
\lim_{k\to\infty}Du(x_k)=p.
$$
The set of all reachable gradients of $u$ at $x$ is denoted by $D^{\ast}u(x)$. For the facts on the superdifferential of a semiconcave function on $\Omega\subset\R^n$, the readers can refer to \cite{Cannarsa_Sinestrari_book}.

A point $x\in\Omega$ is called a {\em singular point} of $u$ if $D^+u(x)$ is not a singleton. The set of all singular points of $u$, also called the {\em singular set} of $u$, is denoted by $\SING$.

For a systematic treatment of semiconcavity and its applications to Hamilton-Jacobi equations, calculus of variations and optimal controls, the readers can refer to the monograph \cite{Cannarsa_Sinestrari_book} (see, also, \cite{Villani_book}).

\subsection{Tonelli Theory and regularity properties of the fundamental solutions}

For the reason that our main purpose is to adapt a finer analysis of Tonelli theory, we will neglect various relevant problem under rather general conditions (see, for instance, \cite{Clarke_Vinter1985_1}). We begin with a classical setting of autonomous Lagrangians. A function $L\in C^2(\R^n\times\R^n,\R)$ is called a {\em Tonelli Lagrangian}\footnote{The collection of conditions (L1) -(L3) is exactly the classical Tonelli conditions when $M$ is a compact manifold.} if $L$ satisfies the following conditions:
\begin{enumerate}[(L1)]
\item {\em Uniform convexity}: There exists a nonincreasing function $\nu:[0,+\infty)\to(0,+\infty)$ such that
\begin{equation*}
L_{vv}(x,v)\geqslant \nu(|v|)I\qquad\forall  (x,v)\in\R^n\times \R^n.
\end{equation*}
\item {\em Growth condition}: There exist two superlinear function $\theta_1,\theta_2:[0,+\infty)\to[0,+\infty)$ and a constant $c_0>0$ such that
$$
\theta_2(|v|)\geqslant L(x,v)\geqslant\theta_1(|v|)-c_0\qquad\forall  (x,v)\in\R^n\times \R^n.
$$
\item {\em Uniform regularity}: There exists a nondecreasing function $K:[0,+\infty)\to[0,+\infty)$ such that, for every multi-index $|\alpha|=1,2$,
\begin{equation*}
|D^\alpha L(x,v)|\leqslant K(|v|)\qquad\forall  (x,v)\in\R^n\times \R^n,
\end{equation*}
\end{enumerate}
If $L$ is a Tonelli Lagrangian, the associated Hamiltonian $H$, which is called a {\em Tonelli Hamiltonian}, is the Fenchel-Legendre dual of $L$ defined by
\begin{equation}\label{eq:H}
H(x,p)=\sup_{v\in\R^n}\big\{\langle p,v \rangle-L(x,v)\big\}\qquad(x,p)\in \R^n\times\R^n\,.
\end{equation}


For any $t>0$, given $x, y\in\R^n$, we set
$$
\Gamma^t_{x,y}=\{{\xi\in W^{1,1}([0,t];\R^n): \xi(0)=x,\xi(t)=y}\}
$$
and define
\begin{equation}\label{fundamental_solution}
A_{t}(x,y)=\min_{\xi\in\Gamma^t_{x,y}}\int^{t}_{0}L(\xi(s),\dot{\xi}(s))ds.
\end{equation}
The existence of the minimum above is a well-known result in Tonelli's theory (see, for instance, \cite{Fathi_book,Buttazzo_Giaquinta_Hildebrandt_book}). Any $\xi\in\Gamma^t_{x,y}$ at which the minimum in \eqref{fundamental_solution} is achieved  is called a {\em minimizer} for $A_t(x,y)$. Such a minimizer $\xi$ is of class $C^2$ by classical results. In the PDE literature, $A_t(x,y)$ is also called the {\em fundamental solution} of the associated Hamilton-Jacobi equations in the form \eqref{eq:evolutionary_intro} (see, for instance, \cite{McEneaney_Dower2015}).

Now, we collect some basic regularity results on the fundamental solution $A_t(x,y)$. The readers can refer to \cite{Cannarsa_Cheng3} for  detailed proofs.

\begin{Pro}[Semiconcavity of the fundamental solution]\label{semiconcave_A_t}
Suppose $L$ is a Tonelli Lagrangian. Then for any $\lambda>0$ there exists a constant $C_\lambda>0$ such that for any $x\in\R^n$, $t\in(0,2/3)$, $y\in B(x,\lambda t)$, and $(h,z)\in\R\times\R^n$  satisfying $|h|<t/2$ and $|z|<\lambda t$ we have
\begin{equation}\label{eq:seminconcavity_A_t}
A_{t+h}(x,y+z)+A_{t-h}(x,y-z)-2A_t(x,y)\leqslant\frac {C_\lambda} t\big(|h|^2+|z|^2\big).
\end{equation}
Consequently, $(t,y)\mapsto A_t(x,y)$ is locally semiconcave in $(0,1)\times\R^n$, uniformly with respect to $x$.
\end{Pro}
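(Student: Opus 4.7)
The plan is to prove \eqref{eq:seminconcavity_A_t} by constructing explicit variational competitors for $A_{t\pm h}(x,y\pm z)$ from a minimizer of $A_t(x,y)$ and then reading off the quadratic remainder from the $C^2$-regularity of $L$. A prerequisite is an a priori velocity bound: there exists $M_\lambda>0$, depending only on $\lambda$ and $L$, such that every $C^2$-minimizer $\xi\in\Gamma^t_{x,y}$ with $t\in(0,1)$ and $y\in B(x,\lambda t)$ satisfies $\|\dot\xi\|_\infty\leqslant M_\lambda$. Indeed, the action of $\xi$ is dominated by that of the straight-line competitor, so $\int_0^tL(\xi,\dot\xi)\,ds\leqslant\theta_2(\lambda)\,t$ by (L2); conservation of the energy $\dot\xi\cdot L_v(\xi,\dot\xi)-L(\xi,\dot\xi)$ along the extremal, combined with superlinearity, upgrades this to a pointwise bound.

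Now fix a minimizer $\xi$ of $A_t(x,y)$ and define, for $s\in[0,t\pm h]$,
\[
\eta_\pm(s)=\xi\!\left(\tfrac{st}{t\pm h}\right)\pm\tfrac{s}{t\pm h}\,z,
\]
so that $\eta_\pm\in\Gamma^{t\pm h}_{x,y\pm z}$. The change of variable $\tau=st/(t\pm h)$ yields
\[
A_{t\pm h}(x,y\pm z)\leqslant\Phi(\pm h,\pm z):=\frac{t\pm h}{t}\int_0^t L\!\left(\xi(\tau)\pm\tfrac{\tau}{t}z,\ \tfrac{t}{t\pm h}\dot\xi(\tau)\pm\tfrac{1}{t\pm h}z\right)d\tau,
\]
so the left-hand side of \eqref{eq:seminconcavity_A_t} is bounded above by $\Phi(h,z)+\Phi(-h,-z)-2\Phi(0,0)$, which is an even function of $(h,z)$; the first-order terms in $(h,z)$ (produced by $L_x\!\cdot\!\delta x$, $L_v\!\cdot\!\delta v$ and the first variation of the prefactor $(t\pm h)/t$) cancel pairwise, leaving only quadratic and higher-order contributions.

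The remaining contributions are estimated using the velocity bound from the first step: the position perturbation $\delta x:=(\tau/t)z$ obeys $|\delta x|\leqslant|z|$, while the velocity perturbation $\delta v:=\mp\tfrac{h}{t\pm h}\dot\xi(\tau)\pm\tfrac{z}{t\pm h}$ obeys $|\delta v|\leqslant C'_\lambda(|h|+|z|)/t$ since $|h|<t/2$ and $\|\dot\xi\|_\infty\leqslant M_\lambda$. Thus the arguments of $L$ lie in a compact set of $\R^n\times\R^n$ on which $|D^2L|\leqslant K_\lambda$ by (L3), and an integral Taylor remainder produces bounds $\int_0^t|L_{vv}||\delta v|^2\,d\tau\leqslant C\,(|h|^2+|z|^2)/t$, $\int_0^t|L_{xv}||\delta x||\delta v|\,d\tau\leqslant C\,|z|(|h|+|z|)$, and $\int_0^t|L_{xx}||\delta x|^2\,d\tau\leqslant C\,|z|^2 t$; the restriction $t<2/3$ lets one absorb the last two into $C_\lambda(|h|^2+|z|^2)/t$, and the residual interaction with the prefactor $(t\pm h)/t=1\pm h/t$ produces terms of the same kind.

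The main obstacle is the careful bookkeeping required to verify that every residual term carries the critical $1/t$ scaling, not merely $O(|h|^2+|z|^2)$; the $(h,z)\mapsto(-h,-z)$ symmetry to kill the linear terms and the factor $1/(t\pm h)\sim 1/t$ in the velocity perturbation are the two ingredients that make this work. The ``consequently'' clause then follows since $M_\lambda$, $K_\lambda$, $C_\lambda$ depend only on $L$ and $\lambda$: on any relatively compact subset $K\subset(0,1)\times\R^n$, one chooses $\lambda$ large enough that $y\in B(x,\lambda t)$ for every $(t,y)\in K$ and every $x$ in a fixed bounded set, and then $C_\lambda/t$ is uniformly bounded on $K$, giving local semiconcavity uniform in $x$.
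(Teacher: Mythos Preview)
The paper does not give its own proof of this proposition; it is quoted as a known regularity result, with the detailed argument deferred to \cite{Cannarsa_Cheng3}. Your approach---building competitors $\eta_\pm$ by reparametrizing and shifting a minimizer for $A_t(x,y)$, killing the first-order terms via the $(h,z)\mapsto(-h,-z)$ symmetry, and bounding the quadratic remainder using the a~priori velocity estimate together with (L3)---is correct and is precisely the standard proof carried out in that reference, so there is nothing substantive to compare.
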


In the general case $t>0$, a local semiconcavity result holds true for $A_t(x,y)$ in the same form as \eqref{eq:seminconcavity_A_t} with $C_\lambda$ depending on $t$.

\begin{Pro}[Covexity of the fundamental solution for small time]\label{convexity_A_t}
Suppose $L$ is a Tonelli Lagrangian and, for any $\lambda>0$,  there exists $t_\lambda'>0$ such that, for any $x\in\R^n$ the function $(t,y)\mapsto A_t(x,y)$ is semiconvex on the  cone
\begin{equation}
\label{cone}
S_\lambda(x,t_\lambda'):=\big\{(t,y)\in\R\times\R^n~:~0<t< t_\lambda',\; |y-x|<\lambda t\big\}\,,
\end{equation}
and there exists a constant $C''_\lambda>0$ such that for all $(t,y)\in S_\lambda(x,t_\lambda')$,  all $h\in[0,t/2)$, and  all $z\in  B(0,\lambda t)$ we have that
\begin{equation}\label{eq:semiconvexity}
A_{t+h}(x,y+z)+A_{t-h}(x,y-z)-2A_t(x,y)\geqslant - \frac{C''_ \lambda}{t}(h^2+|z|^2).
\end{equation}
Moreover, there exist $t''_\lambda\in(0,t_\lambda']$ and  $C'''_{\lambda}>0$ such that for all $t\in(0,t''_\lambda]$ the function $A_t(x,\cdot)$ is uniformly convex on $B(x,\lambda t)$ and   for all $y\in B(x,\lambda t)$ and   $z\in  B(0,\lambda t)$ we have that
\begin{equation}\label{eq:convexity_local}
A_{t}(x,y+z)+A_{t}(x,y-z)-2A_t(x,y)\geqslant \frac{C'''_{\lambda}}{t}|z|^2.
\end{equation}
\end{Pro}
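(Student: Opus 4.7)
The approach is a symmetric averaging/competitor argument exploiting the uniform convexity of $L$ in $v$ from (L1) and the convexity of the associated perspective function. For the semiconvexity \eqref{eq:semiconvexity}, set $t_\pm = t \pm h$, $y_\pm = y \pm z$, and let $\xi_\pm \in \Gamma^{t_\pm}_{x,y_\pm}$ be minimizers realizing $A_{t_\pm}(x,y_\pm)$. Reparametrize them to the common interval $[0,1]$ by $\tilde\xi_\pm(u) = \xi_\pm(t_\pm u)$, form the midpoint curve $\tilde\eta = \tfrac12(\tilde\xi_+ + \tilde\xi_-)$, and rescale back via $\eta(s) = \tilde\eta(s/t)$ on $[0,t]$. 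Since $\eta \in \Gamma^t_{x,y}$,
\begin{equation*}
A_t(x,y) \leq \int_0^1 G(t,\tilde\eta(u),\dot{\tilde\eta}(u))\,du,\qquad G(\tau,y,w):=\tau L(y,w/\tau).
\end{equation*}
For fixed $y$, $G(\cdot,y,\cdot)$ is the perspective of the convex function $L(y,\cdot)$, hence convex in $(\tau,w)$, so
\begin{equation*}
\tfrac12\bigl[G(t_+,\tilde\eta,\dot{\tilde\xi}_+) + G(t_-,\tilde\eta,\dot{\tilde\xi}_-)\bigr] \geq G(t,\tilde\eta,\dot{\tilde\eta}).
\end{equation*}
Replacing $\tilde\eta$ with the correct base points $\tilde\xi_\pm$ on the left introduces a $y$-variation error which, by (L3) together with the bound $|\tilde\xi_+(u) - \tilde\xi_-(u)| \leq C(|z| + t|h|)$ coming from the fact that minimizers are $C^2$-close to straight lines for small $t_\pm$, is at most $\tfrac{C}{t}(h^2+|z|^2)$ after integration. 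Combining with $A_{t_\pm}(x,y_\pm)=\int_0^1 G(t_\pm,\tilde\xi_\pm,\dot{\tilde\xi}_\pm)\,du$ gives \eqref{eq:semiconvexity}.

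For the uniform convexity \eqref{eq:convexity_local}, take $h=0$ and let $\xi_\pm \in \Gamma^t_{x,y \pm z}$ be minimizers; the competitor $\eta = \tfrac12(\xi_+ + \xi_-)$ again lies in $\Gamma^t_{x,y}$. A pointwise Taylor expansion using (L1) and (L3), followed by Young's inequality to absorb the linear $L_y$-cross term, yields
\begin{equation*}
\tfrac12\bigl[L(\xi_+,\dot\xi_+) + L(\xi_-,\dot\xi_-)\bigr] - L(\eta,\dot\eta) \geq \tfrac{\nu(\lambda)}{16}|\dot\xi_+-\dot\xi_-|^2 - C|\xi_+-\xi_-|^2.
\end{equation*}
The Poincar\'e-type bound $|\xi_+(s)-\xi_-(s)|^2 \leq s\int_0^s|\dot\xi_+-\dot\xi_-|^2\,d\sigma$ shows that the negative term is dominated by the positive one when $t$ is small, and Jensen's inequality gives $\int_0^t|\dot\xi_+-\dot\xi_-|^2\,ds \geq 4|z|^2/t$. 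Integration and rearrangement then deliver \eqref{eq:convexity_local} for $t \in (0,t''_\lambda]$ with $t''_\lambda>0$ suitably small and $C'''_\lambda$ proportional to $\nu(\lambda)$.

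The main obstacle lies in the semiconvexity step: convexity of the perspective $G(\cdot,y,\cdot)$ holds only at fixed $y$, so the $y$-variation between $\tilde\eta$ and $\tilde\xi_\pm$ must be quantitatively absorbed into the prescribed $\tfrac{C}{t}(h^2+|z|^2)$ bound. This requires quantitative $C^2$-regularity of short-time minimizers together with the full strength of (L3); the smallness of $t'_\lambda$ is precisely what keeps the mixed $(h,y)$ and $(z,y)$ cross-terms subordinate to the perspective-convexity gain, while the $y$-Hessian coefficient $\tau L_{yy}$ of $G$ vanishes linearly in $\tau$ and helps the error scale correctly with $t$.
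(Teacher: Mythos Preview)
The paper does not supply its own proof of this proposition: it is stated as one of three regularity results (Propositions~\ref{semiconcave_A_t}--\ref{C11_A_t}) for which the reader is referred to \cite{Cannarsa_Cheng3}. So there is no in-text argument to compare against; your sketch should be assessed against the method of that reference, which is indeed the symmetric competitor/averaging argument you outline.

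Your treatment of the uniform convexity \eqref{eq:convexity_local} is clean and correct: Taylor at the midpoint kills the first-order terms, (L1) supplies the $\nu$-gain on $|\dot\xi_+-\dot\xi_-|^2$, Young absorbs the $L_{xv}$-cross term, Poincar\'e makes the $L_{xx}$-term subcritical for small $t$, and Jensen converts $\int|\dot\xi_+-\dot\xi_-|^2$ into $|2z|^2/t$. One cosmetic point: write $\nu(C_\lambda)$ with $C_\lambda$ an a~priori velocity bound on the cone rather than $\nu(\lambda)$, since (L1) gives $\nu$ as a function of $|v|$ and you need it evaluated at the actual minimizer speeds.

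For the semiconvexity \eqref{eq:semiconvexity} your framework (perspective convexity of $G(\tau,y,w)=\tau L(y,w/\tau)$ in $(\tau,w)$ at fixed $y$, then control of the $y$-variation) is the right one, and it matches the argument in \cite{Cannarsa_Cheng3}. The step that remains genuinely incomplete in your write-up is the quantitative bound on the first-order $y$-error
\[
\tfrac12\Big[t_+L_x(\tilde\xi_+,v_+)-t_-L_x(\tilde\xi_-,v_-)\Big]\cdot\tfrac12(\tilde\xi_+-\tilde\xi_-),
\qquad v_\pm=\dot{\tilde\xi}_\pm/t_\pm,
\]
which does not cancel by symmetry. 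To close it you need an estimate on $|v_+-v_-|$; the straight-line approximation gives $v_\pm\approx (y_\pm-x)/t_\pm$, hence $|v_+-v_-|\leqslant C(|z|+|h|)/t$ on the cone, and combined with $|\tilde\xi_+-\tilde\xi_-|\leqslant C|z|+O(t^2)$ this makes the first-order error $O(|z|^2+h^2)$ after integration, which is in fact stronger than the $C/t$ bound you claim. Spelling out precisely this velocity-difference estimate (via the short-time $C^2$ bounds on minimizers, i.e.\ Gronwall on the Euler--Lagrange equation using (L1), (L3)) is the one place where your sketch should be upgraded to a proof; your last paragraph already flags this as the crux, so the diagnosis is accurate.
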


The combination of the two propositions above leads to a $C^{1,1}$ result which is essentially connected to the Lasry-Lions type regularization (\cite{Lasry_Lions1986,Bernard2007}).

\begin{Pro}\label{C11_A_t}
Suppose $L$ is a Tonelli Lagrangian and, for any $\lambda>0$,  there exists $t_\lambda'>0$ such that, for any $x\in\R^n$ the functions $(t,y)\mapsto A_t(x,y)$ and $(t,y)\mapsto A_t(y,x)$ are of class $C^{1,1}_{\text{loc}}$ on the cone $S_{\lambda}(x,t_\lambda')$ defined in \eqref{cone}.
Moreover, for all $(t,y)\in S(x,t_\lambda')$
\begin{align}
D_yA_t(x,y)=&L_v(\xi(t),\dot{\xi}(t)),\label{eq:diff_A_t_y}\\
D_xA_t(x,y)=&-L_v(\xi(0),\dot{\xi}(0)),\label{eq:diff_A_t_x}
\end{align}
where $\xi\in\Gamma^t_{x,y}$ is the unique minimizer for $A_t(x,y)$.
\end{Pro}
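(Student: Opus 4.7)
The plan is to extract the $C^{1,1}_{\text{loc}}$ regularity from the combination of the semiconcavity in Proposition \ref{semiconcave_A_t} and the semiconvexity in Proposition \ref{convexity_A_t}, and then to identify the gradient by the classical first-variation sensitivity calculation from the calculus of variations. The uniform strict convexity of $A_t(x,\cdot)$ on $B(x,\lambda t)$ from \eqref{eq:convexity_local}, together with the uniqueness of extremals for short time, will give the uniqueness of the minimizer needed to make the formulas \eqref{eq:diff_A_t_y}--\eqref{eq:diff_A_t_x} unambiguous.

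For the regularity, I would invoke the standard fact (see \cite{Cannarsa_Sinestrari_book}) that a function which is simultaneously semiconcave and semiconvex on an open set is of class $C^{1,1}_{\text{loc}}$, with the Lipschitz constant of the gradient controlled by the sum of the two moduli. Applied on $S_\lambda(x,t_\lambda')$ (possibly shrinking $t_\lambda'$ so that both bounds of Propositions \ref{semiconcave_A_t} and \ref{convexity_A_t} apply), this yields that $(t,y)\mapsto A_t(x,y)$ is $C^{1,1}_{\text{loc}}$ on $S_\lambda(x,t_\lambda')$, uniformly in $x$. Since $A_t(x,y)=A_t(y,x)$ up to the natural time-reversal (and in any case the minimization problem from $y$ to $x$ satisfies the same Tonelli hypotheses), the same argument gives the analogous statement for $(t,y)\mapsto A_t(y,x)$.

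To get the derivative formulas, I fix $(t,y)\in S_\lambda(x,t_\lambda')$ and let $\xi\in\Gamma^t_{x,y}$ be a minimizer; by \eqref{eq:convexity_local} and a short-time ODE argument on the Euler-Lagrange equation (the endpoint map $v\mapsto\xi_v(t)$ starting at $x$ with initial velocity $v$ is a local diffeomorphism for small $t$), the minimizer is unique and of class $C^2$. For $z\in\R^n$ small, I choose any smooth $\phi:[0,t]\to\R$ with $\phi(0)=0$, $\phi(t)=1$ and consider the competitor $\tilde\xi(s)=\xi(s)+\phi(s)z$, obtaining
\begin{equation*}
A_t(x,y+z)\leqslant A_t(x,y)+\int_0^t\bigl[\langle L_x(\xi,\dot\xi),\phi z\rangle+\langle L_v(\xi,\dot\xi),\dot\phi z\rangle\bigr]\,ds+o(|z|).
\end{equation*}
Integrating the second term by parts and using the Euler-Lagrange equation $\frac{d}{ds}L_v(\xi,\dot\xi)=L_x(\xi,\dot\xi)$, the interior terms cancel and one is left with $\langle L_v(\xi(t),\dot\xi(t)),z\rangle+o(|z|)$. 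This shows $L_v(\xi(t),\dot\xi(t))\in D^+_yA_t(x,y)$, and since $A_t$ is differentiable in $y$ this forces $D_yA_t(x,y)=L_v(\xi(t),\dot\xi(t))$. An entirely symmetric perturbation at the initial endpoint $x$, with $\phi(0)=1$ and $\phi(t)=0$, gives \eqref{eq:diff_A_t_x} with the minus sign coming from the boundary term at $s=0$.

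The principal delicate point, and where I would be most careful, is choosing $t_\lambda'$ small enough (possibly smaller than the one supplied by Proposition \ref{convexity_A_t}) so that three things hold simultaneously on $S_\lambda(x,t_\lambda')$: (i) the minimizer is unique and depends continuously on the endpoints, so that the right-hand sides of \eqref{eq:diff_A_t_y}--\eqref{eq:diff_A_t_x} are well-defined and continuous; (ii) the competitor $\tilde\xi$ remains in the range where the Tonelli hypotheses and the $C^2$-regularity of minimizers give the expansion above with a genuine $o(|z|)$ remainder, uniform in $x$; and (iii) the semiconcavity and semiconvexity bounds coexist so the $C^{1,1}$ argument applies. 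All three requirements are of short-time type and are by now standard in Tonelli theory, but keeping the estimates uniform in the base point $x\in\R^n$ requires the uniform regularity condition (L3).
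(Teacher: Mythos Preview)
Your proposal is correct and follows exactly the route the paper indicates: the paper does not give a detailed proof but simply remarks that ``the combination of the two propositions above leads to a $C^{1,1}$ result'' and refers to \cite{Cannarsa_Cheng3}, which is precisely your semiconcave-plus-semiconvex-implies-$C^{1,1}_{\text{loc}}$ argument, and the formulas \eqref{eq:diff_A_t_y}--\eqref{eq:diff_A_t_x} are the standard first-variation identities you derive. One small caution: the identity $A_t(x,y)=A_t(y,x)$ need not hold for a non-reversible Lagrangian, so your parenthetical fallback (applying the same Tonelli estimates directly to the reversed problem) is the correct justification for the $(t,y)\mapsto A_t(y,x)$ half, not the symmetry claim.
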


\subsection{Lax-Oleinik semigroup and weak KAM solution}
We will concentrate the case when $M=\R^n$. For all  $(t,x)\in\R_+\times\R^n$  let
\begin{equation}
\label{eq:phipsi}
\phi^x_t(y)=u(y)-A_t(x,y)\quad\mbox{and}\quad \psi^x_t(y) =u(y)+A_t(y,x)\qquad(y\in\R^n)
\end{equation}
where $A_t$ is the fundamental solution defined in \eqref{fundamental_solution}. The Lax-Oleinik operators $T^-_t$ and $T^+_t$ are defined as follows
\begin{gather}
T^+_tu(x)=\sup_{y\in\R^n}\phi^x_t(y),\quad x\in\R^n,
\label{L-L regularity_sup}\\
T^-_tu(x)=\inf_{y\in\R^n}\psi^x_t(y),\quad x\in\R^n.
\label{L-L regularity_inf}
\end{gather}
The functions $\phi^x_t$ and $\psi^x_t$ are called {\em local barrier functions}.

We define the Ma\~n\'e's critical value as
$$
c_H[0]=-\inf\{A_t(x,x)/t: t>0, x\in\R^n\}.
$$
A continuos function $u\in C(\R^n,\R)$ is called a {\em weak KAM solution} of the Hamiltoni-Jacobi equation
\begin{equation}\label{eq:weak_KAM}
	H(x,Du(x))=c_H[0],
\end{equation}
if and only if $T^-_tu=u$ for all $t>0$, where $T^-_t$ is the negative type Lax-Oleinik operator with respect to $L+c_H[0]$. 

We say a function $u:\R^n\to\R$ is dominated by $L+c_H[0]$, denoted by $u\prec L+c_H[0]$, if for any curve $\gamma\in\Gamma^t_{x,y}$,
$$
u(y)-u(x)\leqslant\int^t_0L(\gamma(s),\dot{\gamma}(s))\ ds+c_H[0]t.
$$
It is well known that $u\prec L+c_H[0]$ if and only if $u$ is a viscosity subsolution of \eqref{eq:weak_KAM}. If $u\prec L+c_H[0]$, $a<b$, a curve $\gamma:[a,b]\to\R^n$ is called {\em $(u,L,c_H[0])$-calibrated} or {\em $u$-calibrated} in short if
$$
u(\gamma(b))-u(\gamma(a))=\int^b_aL(\gamma(s),\dot{\gamma}(s))\ ds+c_H[0](b-a).
$$
A curve $\gamma$ defined from $(-\infty,t_0]$ (resp. $[t_0,+\infty)$, $(-\infty,+\infty)$) to $\R^n$ is called a {\em backward} (resp. {\em forward, global}) {\em $u$-calibrating curve} if $\gamma\vert_{[a,b]}$ is $u$-calibrated for any $a<b\leqslant t_0$ (resp. $t_0\leqslant a<b$, $a<b$).

We also define the Aubry set with respect to a weak KAM solution $u$ of \eqref{eq:weak_KAM} as
$$
\IU=\{x\in\R^n: \text{$x=\gamma(0)$ with $\gamma:\R\to\R^n$ a global $u$-calibrated curve}\}.
$$
If $M$ is a compact manifold, in view of classical weak KAM theory, the projected Aubry set $\mathcal{A}=\bigcap_{u}\IU$ is nonempty and compact, where the intersection is taken over all the weak KAM solutions $u$'s of \eqref{eq:weak_KAM}.

\begin{Pro}[\cite{Fathi_Maderna2007,Cannarsa_Cheng3,Chen_Cheng2016}]\label{weak_KAM}
Let $M=\R^n$. Then, there exists a unique constant $c_H[0]$ such that \eqref{eq:weak_KAM} admit a weak KAM solution. In addition, each weak KAM solution of \eqref{eq:weak_KAM} is globally Lipschitz and semiconcave.

Moreover, let $\phi^x_t$ and $\psi^x_t$ be the local barrier functions with respect to $L+c_H[0]$. Then, there exists a  constant $\lambda_0>0$, depending only on $L$ and ${\rm Lip} (u)$, such that, for any $(t,x)\in\R_+\times\R^n$ and any maximum (resp. minimum) point $y_{t,x}$ (resp. $z_{t,x}$) of $\phi^x_t$ (resp. $\psi^x_t$), we have
\begin{equation}
\label{eq:maxbound}
	|y_{t,x}-x|\leqslant \lambda_0t\,\quad (\text{resp.}\ |z_{t,x}-x|\leqslant \lambda_0t).
\end{equation}

In addtion, there exists a one-to-one correspondence between $p\in D^*u(x)$ and the global minimizers $z_{t,x}$ of $\psi^x_t$ for all $t>0$.
\end{Pro}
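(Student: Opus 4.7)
The plan is to assemble four pieces, each largely standard in weak KAM theory but needing adaptation to the noncompact setting $M=\R^n$: existence and uniqueness of $c_H[0]$, Lipschitz and semiconcave regularity of $u$, the linear estimate \eqref{eq:maxbound} on argmax/argmin of the local barriers, and the bijection between $D^{\ast}u(x)$ and global minimizers of $\psi^x_t$.

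Uniqueness of $c_H[0]$ will follow directly from its definition. If $\overline u\prec L+c$, applying the domination inequality to any loop $\gamma\in\Gamma^t_{x,x}$ yields $A_t(x,x)+ct\geqslant 0$, hence $c\geqslant -A_t(x,x)/t$; taking the supremum over $(t,x)$ forces $c\geqslant c_H[0]$. Existence of a weak KAM solution with a global Lipschitz bound is the Fathi--Maderna theorem on $\R^n$, which extends Fathi's fixed-point argument for the Lax--Oleinik semigroup to a suitable subspace of Lipschitz functions of controlled oscillation, thereby circumventing the loss of compactness. Once $u$ is globally Lipschitz, semiconcavity is immediate from the identity $u=T^-_tu$: for fixed small $t>0$, $u$ is the infimum over $z\in\R^n$ of the family $y\mapsto u(z)+A_t(z,y)+c_H[0]t$, which by Proposition \ref{semiconcave_A_t} is uniformly locally semiconcave in $y$ with constant $C_\lambda/t$ independent of $z$, so the infimum inherits the same modulus.

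For the bound \eqref{eq:maxbound}, consider the minimizer $z=z_{t,x}$ of $\psi^x_t$; the case $y_{t,x}$ is symmetric. From $\psi^x_t(z)\leqslant \psi^x_t(x)$,
\[
A_t(z,x)-A_t(x,x)\leqslant u(x)-u(z)\leqslant \mathrm{Lip}(u)\,|z-x|.
\]
Using the constant curve $x$ as a competitor, $A_t(x,x)\leqslant tL(x,0)\leqslant K(0)t$ by (L3), while Jensen's inequality applied to $\theta_1$ yields $A_t(z,x)\geqslant t\theta_1(|z-x|/t)-c_0t$. Setting $r=|z-x|/t$, these combine to
\[
\theta_1(r)-\mathrm{Lip}(u)\,r\leqslant c_0+K(0),
\]
and the superlinearity of $\theta_1$ forces $r\leqslant \lambda_0$ for some $\lambda_0$ depending only on $L$ and $\mathrm{Lip}(u)$. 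The $c_H[0]$ term drops out of the argmin/argmax since it is a constant shift.

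For the one-to-one correspondence, I plan the following. Given a global minimizer $z_{t,x}$, first-order optimality plus Proposition \ref{C11_A_t} produces a unique $C^2$ minimizing curve $\xi\in\Gamma^t_{z_{t,x},x}$; the definition of $z_{t,x}$ and $u\prec L+c_H[0]$ force $\xi$ to be $u$-calibrated. The dual arc $p(s)=L_v(\xi(s),\dot\xi(s))$ lies in $D^+u(\xi(s))$ throughout, and passing to the limit $s\uparrow t$ along the interior points of the calibrated curve, where $u$ is differentiable with $Du(\xi(s))=L_v(\xi(s),\dot\xi(s))$, exhibits $p(t)$ as an element of $D^{\ast}u(x)$. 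Conversely, for $p\in D^{\ast}u(x)$ pick $x_k\to x$ with $u$ differentiable at $x_k$ and $Du(x_k)\to p$; the unique calibrated curve through each $x_k$ determines a unique $z_{t,x_k}$, which by \eqref{eq:maxbound} stays in a bounded set, and a limit point $z_{t,x}$ inherits minimality by continuity of $u$ and of $A_t(\cdot,\cdot)$. Injectivity is automatic because $(x,p)$ determines $\xi$ via the backward Hamiltonian flow, hence $z_{t,x}=\xi(0)$. The main obstacle in this last part is the endpoint step $p(t)\in D^{\ast}u(x)$ rather than merely $p(t)\in D^+u(x)$: this relies crucially on propagation of differentiability along calibrated curves for semiconcave functions, which is the ingredient that distinguishes reachable gradients from arbitrary superdifferential elements.
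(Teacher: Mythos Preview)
The paper does not supply a proof of Proposition~\ref{weak_KAM}; it is quoted from the references \cite{Fathi_Maderna2007,Cannarsa_Cheng3,Chen_Cheng2016} and used as a black box throughout (the only statement actually proved in the paper is Proposition~\ref{maximizer_singular}). So there is no in-paper argument to compare your proposal against.

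Your outline is the standard one and is essentially correct. Two small technical slips in the proof of \eqref{eq:maxbound}: the bound $L(x,0)\leqslant K(0)$ cannot come from (L3), which controls only first and second derivatives of $L$; use (L2) instead, giving $L(x,0)\leqslant\theta_2(0)$. And invoking Jensen for $\theta_1$ presumes convexity, which (L2) does not assert---either replace $\theta_1$ by its convex envelope (still superlinear) or use the equivalent linear lower bound $\theta_1(r)\geqslant Rr-C_R$ for each $R>0$ and choose $R>\mathrm{Lip}(u)$. In the last paragraph your identification of the delicate point---that the terminal momentum lies in $D^{\ast}u(x)$ and not merely in $D^+u(x)$---is exactly right, and the approximation-by-differentiability-points argument you sketch is the standard device.
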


We also introduce the set $\CUT$ of cut points of $u$, as the set of points $x\in\R^n$ where no backward $u$-calibrating curve ending at $x$ can be extended to a $u$-calibrating curve beyond $x$. Equivalently, if $\gamma:[a,b]\to\R^n$ is a $u$-calibrating curve with $x\in\gamma([a,b])$ then $x=\gamma(b)$.

At the end of this section, we introduce the concept of generalized characteristic first introduced in \cite{Albano_Cannarsa2002}. A Lipschitz arc $\mathbf{x}:[0,T]\to\R^n,\,(T>0),$ is said to be a {\em generalized characteristic} of the Hamilton-Jacobi equation \eqref{eq:weak_KAM} if $\mathbf{x}$ satisfies the differential inclusion
\begin{equation}\label{generalized_characteristics}
\dot{\mathbf{x}}(s)\in\mathrm{co}\, H_p\big(\mathbf{x}(s),D^+u(\mathbf{x}(s))\big),\quad \text{a.e.}\;s\in[0,T]\,.
\end{equation}
We say that $x\in\R^n$  is a {\em critical point} with respect to a viscosity solution  $u$ of \eqref{eq:weak_KAM} if
\begin{equation}\label{eq:critical1}
	0\in \text{co}\,H_p(x,D^+u(x)).
\end{equation}

\section{Intrinsic approach of global propagation of singularities}

Unlike an energy-estimates method used in the global propagation of singularities of the viscosity solutions developed in \cite{ACNS2013,Albano2016_1,Cannarsa_Mazzola_Sinestrari2015} for eikonal equations and mechanical systems, an intrinsic approach first appears in \cite{Cannarsa_Cheng3} which is only based on the associated characteristic systems and can be applied to rather general problems.

\subsection{Global propagation of singularities}

We begin with the setting from \cite{Cannarsa_Cheng3}. The following proposition is a crucial step for the theory. Now, we give a proof which is different from the original one in \cite{Cannarsa_Cheng3}.

\begin{Pro}[\cite{Cannarsa_Cheng3}]\label{maximizer_singular}
Let $L$ be a Tonelli Lagrangian and let $H$ be the associated Hamiltonian. Suppose $u:\R^n\to\R$ is a Lipschitz continuous semiconcave viscosity solution of \eqref{eq:weak_KAM}. If $x\in\CUT$, then any maximizer of $\phi^x_t$ is contained in $\SING$ for all $t>0$. Moreover, there exists $t_0\in (0,1]$ such that, for all  $(t,x)\in(0,t_0]\times\R^n$, $\phi^x_t$ admits a unique maximum point $y_{t,x}$ and the curve
\begin{equation}\label{maximizer_arc}
\mathbf{y}(t)=\mathbf{y}_x(t):=\begin{cases}
x&\mbox{if}\quad t=0\\
y_{t,x}&\mbox{if}\quad t\in(0,t_0]
\end{cases}
\end{equation}
satisfies $\lim_{t\to0}\mathbf{y}(t)=x$.
\end{Pro}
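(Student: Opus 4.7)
I would approach the first claim via the Fermat-type condition at a maximizer $y$ of $\phi^x_t=u-A_t(x,\cdot)$. Both $u$ and $A_t(x,\cdot)$ are (locally) semiconcave---the latter by Proposition~\ref{semiconcave_A_t} and its analogue for general $t>0$. The maximum inequality $u(y')-u(y)\leqslant A_t(x,y')-A_t(x,y)$, combined with the semiconcave upper bound $A_t(x,y')-A_t(x,y)\leqslant\langle p,y'-y\rangle+o(|y'-y|)$ valid for every $p\in D^+_yA_t(x,y)$, yields at once
\begin{equation*}
D^+_yA_t(x,y)\subseteq D^+u(y).
\end{equation*}
For any minimizer $\xi\in\Gamma^t_{x,y}$ of $A_t(x,y)$ the dual momentum $p_y:=L_v(\xi(t),\dot\xi(t))$ belongs to $D^+_yA_t(x,y)$ (by a standard calibration-perturbation argument, or by Proposition~\ref{C11_A_t} in the small-time regime), so in particular $p_y\in D^+u(y)$.

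Suppose now, for contradiction, that $y\notin\SING$, so that $D^+u(y)=\{p_y\}$. Since $D^+u=\mathrm{co}\,D^\ast u$ for semiconcave $u$, also $D^\ast u(y)=\{p_y\}$. The one-to-one correspondence in Proposition~\ref{weak_KAM} then produces a backward $u$-calibrating curve $\eta$ ending at $y$ with $L_v(y,\dot\eta(0))=p_y$. Strict convexity of $L$ in $v$ gives $\dot\eta(0)=\dot\xi(t)$, and Euler--Lagrange uniqueness forces $\eta$ to coincide with $\xi$ on $[0,t]$ (after the obvious time shift), so $\xi$ itself is $u$-calibrated from $x$ to $y$. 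If $x\neq y$, this directly violates $x\in\CUT$, since $x=\xi(0)$ lies on a $u$-calibrating curve whose right endpoint is $y\neq x$. The degenerate case $x=y$ is ruled out by using that $x\in\CUT$ forbids any forward $u$-calibrated extension of positive length from $x$, whence $T^+_tu(x)<u(x)$ and $A_t(x,x)>0$; but calibration of $\xi$ would require $0=u(y)-u(x)=A_t(x,y)=A_t(x,x)$, a contradiction.

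For the second claim, Proposition~\ref{weak_KAM} supplies the uniform bound $|y_{t,x}-x|\leqslant\lambda_0 t$ on any maximizer, while Proposition~\ref{convexity_A_t} makes $A_t(x,\cdot)$ uniformly convex on $B(x,\lambda_0 t)$ with modulus $C'''_{\lambda_0}/t$, uniformly in $x$. Since $u$ is semiconcave with a global constant $C$ (again by Proposition~\ref{weak_KAM}), for $t$ small enough that $C'''_{\lambda_0}/t>C$ the function $\phi^x_t$ becomes strictly concave on $B(x,\lambda_0 t)$; picking $t_0\in(0,t''_{\lambda_0}]$ accordingly, and noting that the choice depends only on $L$ and $\mathrm{Lip}(u)$, one obtains uniqueness of $y_{t,x}$, while $|y_{t,x}-x|\leqslant\lambda_0 t$ yields $\mathbf{y}(t)\to x$ as $t\downarrow 0$.

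The step I expect to be most delicate is the degenerate case $y=x$: the $A_t(x,x)>0$ argument above hinges on reading the $\CUT$ definition as precluding any forward-calibrating extension from $x$ (including the possibility of a constant calibrating loop), and one must verify that this reading is compatible with the formulation given in Section~2. Once that subtlety is dispatched, the proof is a transparent assembly of the superdifferential inclusion, the $D^\ast u$--backward-calibration dictionary, and the quantitative semiconcavity/convexity estimates from Propositions~\ref{semiconcave_A_t}--\ref{C11_A_t}.
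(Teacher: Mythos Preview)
Your proof is correct and follows essentially the same route as the paper's. Two small points of comparison: the paper sidesteps your $x=y$ worry by observing that once the minimizer $\xi$ is identified with the restriction to $[0,t]$ of a backward $u$-calibrating curve $\gamma_x:(-\infty,t]\to\R^n$, the point $x=\gamma_x(0)$ sits at an interior \emph{time} of a calibrating arc, which already contradicts $x\in\CUT$ without any case distinction; and for the uniqueness part the paper applies Proposition~\ref{convexity_A_t} with $\lambda=1+\lambda_0$ rather than $\lambda_0$, so that the maximizer (which satisfies $|y_{t,x}-x|\leqslant\lambda_0 t$) is automatically strictly interior to the ball on which strict concavity of $\phi^x_t$ has been established.
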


\begin{proof}
	For any $t>0$ and $y_{t,x}\in\arg\max\phi^x_t$ (which is nonempty by Proposition \ref{weak_KAM}), suppose $y_{t,x}$ is a differentiable point of $u$. Thus
	$$
	0\in D^+\{u(\cdot)-A_t(x,\cdot)\}(y_{t,x})=Du(y_{t,x})-D^-\{A_t(x,\cdot)\}(y_{t,x}),
	$$
	equivalently, $Du(y_{t,x})\in D^-\{A_t(x,\cdot)\}(y_{t,x})$. It follows that $A_t(x,\cdot)$ is differentiable at $y_{t,x}$ and
	$$
	p_{t,x}=Du(y_{t,x})=D_yA_t(x,y_{t,x})
	$$
	since $A_t(x,\cdot)$ is locally semiconcave. Therefore, there exists two $C^2$ extremals (with respect to the associated Euler-Lagrange equation) $\xi_{t,x}:[0,t]\to\R^n$ and $\gamma_x:(-\infty,t]\to\R^n$ such that $\xi_{t,x}(0)=x$, $\gamma_x(t)=\xi_{t,x}(t)=y_{t,x}$ and $p_{t,x}=L_v(\gamma_x(t),\dot{\gamma}_x(t))=L_v(\xi_{t,x}(t),\dot{\xi}_{t,x}(t))$. Since $\xi_{t,x}$ and $\gamma_x$ has the same endpoint condition at $t$, then they coincide on $[0,t]$. This leads to a contradiction since $x=\gamma_x(0)$ and $\gamma_x(0)\not\in\CUT$.
	
	Now we turn to the proof of the last part of the proposition. Let  $C_1>0$ be a semiconcavity constant for $u$ on $\R^n$ and let $\lambda_0$ be the positive constant in Proposition \ref{weak_KAM}. By  Proposition~\ref{convexity_A_t} with $\lambda=1+\lambda_0$, we deduce that there exists $t_0\in(0,1]$ and a constant $C_2>0$ such that for every $(t,x)\in (0,t_0]\times\R^n$, every $y\in B(x,\lambda t)$, and every $z\in B(0,\lambda t)$ we have that
	\begin{equation*}
	A_{t}(x,y+z)+A_{t}(x,y-z)-2A_t(x,y)\geqslant \frac{C_2}{t}|z|^2.
	\end{equation*}
	Thus,  $\phi^x_t(y)=u(y)-A_t(x,y)$ is strictly concave on $\overline{B}(x,\lambda t)$ for all $t\in(0,t_0]$ provided that we further restrict $t_0$ in order to have
	\begin{equation}\label{eq:t_0}
	t_0<\frac{C_2}{C_1}.
	\end{equation}
	Then, for all such numbers $t$, there exists a unique maximum point $y_{t,x}$ of $\phi^x_t$ in $\overline{B}(x,\lambda t)$. In fact, $y_{t,x}$ is an interior point of $B(x,\lambda t)$ since, by Proposition \ref{weak_KAM}, we have that $|y_t-x|\leqslant\lambda_0 t$.
\end{proof}

We can see that the curve $\mathbf{y}$ defined on $[0,t_0]$ as in \eqref{maximizer_arc} is indeed a generalized characteristic which can be extend to $+\infty$ since $t_0$ is independent of the initial point $x$.

\begin{Pro}[\cite{Cannarsa_Cheng3}]\label{Main_lemma_g_c}
Let $L$ be a Tonelli Lagrangian. Let $t_0\in(0,1]$ be given by Proposition \ref{maximizer_singular}. For any fixed $x\in\R^n$, let $\mathbf{y}:[0,t_0]\to\R^n$ be the curve defined in \eqref{maximizer_arc}. Then
\begin{enumerate}[\rm (a)]
  \item $\mathbf{y}$ is Lipschitz on $[0,t_0]$.
  \item The singular arc $\mathbf{y}:[0,t_0]\to\R^n$ is a generalized characteristic, that is,
  \begin{equation*}
  \dot{\mathbf{y}}(\tau)\in\mbox{\rm co}\, H_p(\mathbf{y}(\tau),D^+u(\mathbf{y}(\tau))),\quad\text{a.e.}\ \tau\in[0,t_0].
  \end{equation*}
  Moreover,
  \begin{equation*}
  \dot{\mathbf{y}}^+(0)=H_p(x,p_0),
  \end{equation*}
  where $p_0$ is the unique element of minimal energy:
  $$
  H(x,p)\geqslant H(x,p_0),\quad \forall p\in D^+u(x).
  $$
\end{enumerate}
\end{Pro}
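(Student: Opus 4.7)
The plan is to treat parts (a) and (b) in sequence, both built on the strong-concavity structure of $\phi^x_t$ set up in Proposition~\ref{maximizer_singular}.

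For (a), the plan is to fix $0<t_1<t_2\leq t_0$ and exploit that, by Proposition~\ref{maximizer_singular}, each $\phi^x_{t_i}=u(\cdot)-A_{t_i}(x,\cdot)$ is $\mu_{t_i}$-strongly concave on $\overline{B}(x,\lambda t_i)$ with constant $\mu_t=C_2/t-C_1>0$, attaining its unique maximum at the interior point $y_{t_i,x}$. Writing the strong-concavity estimate at time $t_i$ evaluated at the competing maximizer $y_{t_j,x}$ (which lies in $\overline{B}(x,\lambda t_i)$ thanks to $|y_{t_j,x}-x|\leq\lambda_0 t_j$ from Proposition~\ref{weak_KAM}) and summing the two inequalities would give
\[
\tfrac{\mu_{t_1}+\mu_{t_2}}{2}\,|y_{t_1,x}-y_{t_2,x}|^2 \;\leq\; \int_{t_1}^{t_2}\!\bigl[\partial_s A_s(x,y_{t_1,x}) - \partial_s A_s(x,y_{t_2,x})\bigr]\,ds.
\]
The joint $C^{1,1}$ regularity of $A_t(x,y)$ on the cone (Proposition~\ref{C11_A_t}, with quantitative bounds from Propositions~\ref{semiconcave_A_t}--\ref{convexity_A_t}) will bound the integrand by $(C/s)\,|y_{t_1,x}-y_{t_2,x}|$; combined with $\mu_t\gtrsim 1/t$, this collapses to the Lipschitz estimate $|y_{t_1,x}-y_{t_2,x}|\leq C'(t_2-t_1)$.

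For (b), the starting point will be the first-order optimality at the interior maximum $\mathbf{y}(t)$: since $A_t(x,\cdot)$ is $C^{1,1}$,
\[
p_t := D_y A_t(x,\mathbf{y}(t)) \in D^+u(\mathbf{y}(t))\qquad\text{for every } t\in(0,t_0],
\]
and Proposition~\ref{C11_A_t} identifies $p_t = L_v(\xi^t(t),\dot\xi^t(t))$, so the extremal exit velocity $\dot\xi^t(t) = H_p(\mathbf{y}(t),p_t)$ already lies in $H_p(\mathbf{y}(t),D^+u(\mathbf{y}(t)))$. To promote this to the inclusion $\dot{\mathbf{y}}(\tau)\in\mathrm{co}\,H_p(\mathbf{y}(\tau),D^+u(\mathbf{y}(\tau)))$ at each differentiability point $\tau$, we test against an arbitrary linear functional $q\in\R^n$: comparing the optimal value $\phi^x_{\tau+h}(y_{\tau+h,x})$ with the value at the competitor $\mathbf{y}(\tau)+hw$, and using the semiconcavity of $u$ together with the Taylor expansion of $A_{\tau+h}(x,\cdot)$, should lead to
\[
\langle q,\dot{\mathbf{y}}(\tau)\rangle \;\leq\; \sup_{p\in D^+u(\mathbf{y}(\tau))}\langle q,H_p(\mathbf{y}(\tau),p)\rangle\qquad\forall\,q\in\R^n,
\]
which is equivalent to the convex-hull membership by the bipolar representation. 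To identify the initial velocity, we would expand $\phi^x_t$ on the scale $y=x+tv$: from $A_t(x,x+tv)=tL(x,v)+o(t)$ and the directional-derivative formula $u'(x;v)=\min_{p\in D^+u(x)}\langle p,v\rangle$ valid for semiconcave $u$,
\[
T^+_t u(x)-u(x) = t\cdot\sup_v\min_{p\in D^+u(x)}\bigl[\langle p,v\rangle-L(x,v)\bigr] + o(t).
\]
Sion's minimax theorem (applicable since $D^+u(x)$ is convex compact and the bracketed expression is linear in $p$, concave in $v$) rewrites the right-hand side as $t\,\min_{p\in D^+u(x)}H(x,p)=t\,H(x,p_0)$, and the strict convexity of $H(x,\cdot)$ singles out the unique optimal $v=H_p(x,p_0)$, whence $y_{t,x}=x+tH_p(x,p_0)+o(t)$ and $\dot{\mathbf{y}}^+(0)=H_p(x,p_0)$.

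The hard part will be the intermediate step in (b): the derivative $\dot{\mathbf{y}}(\tau)$ need not coincide with the single extremal exit velocity $\dot\xi^\tau(\tau)=H_p(\mathbf{y}(\tau),p_\tau)$, because varying $t$ reshuffles the entire family of minimizers $\{\xi^t\}$ rather than merely extending one of them; hence the separation argument has to rely carefully on the semiconcavity of $u$ and the quantitative $C^{1,1}$ bounds of Propositions~\ref{semiconcave_A_t}--\ref{C11_A_t} in order to keep the resulting ``mixed'' velocity inside $\mathrm{co}\,H_p(\mathbf{y}(\tau),D^+u(\mathbf{y}(\tau)))$.
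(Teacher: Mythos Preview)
The paper itself does not prove this proposition: it is a survey, and Proposition~\ref{Main_lemma_g_c} is simply quoted from \cite{Cannarsa_Cheng3} without argument. There is therefore no in-paper proof to benchmark your outline against.

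Assessed on its own merits, your plan for (a) and for the initial-velocity part of (b) is sound. In (a), the summed strong-concavity inequality you wrote is correct, and the $O(1/s)$ Lipschitz constant of $\partial_sA_s(x,\cdot)$ coming from Propositions~\ref{semiconcave_A_t}--\ref{convexity_A_t} does balance against $\mu_t\sim C_2/t$ to give a Lipschitz constant independent of $t$; the bound $|y_{t,x}-x|\le\lambda_0 t$ from Proposition~\ref{weak_KAM} then handles $t=0$. For $\dot{\mathbf y}^+(0)$, the rescaling $y=x+tv$ and the minimax step are correct; just note that passing from the expansion of the \emph{value} $T^+_tu(x)-u(x)$ to that of the \emph{maximizer} $y_{t,x}$ needs the uniform strong concavity of the rescaled functional together with uniform convergence of $t^{-1}[u(x+tv)-u(x)]$ to $\min_{p\in D^+u(x)}\langle p,v\rangle$ on $\overline B(0,\lambda_0)$ (which follows by a Dini argument for the concave part of $u$).

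The genuine gap is the inclusion $\dot{\mathbf y}(\tau)\in\mathrm{co}\,H_p(\mathbf y(\tau),D^+u(\mathbf y(\tau)))$ at a generic $\tau>0$. Your proposed separation step ``compare $\phi^x_{\tau+h}(\mathbf y(\tau+h))$ with $\phi^x_{\tau+h}(\mathbf y(\tau)+hw)$'' requires a \emph{lower} bound on $u(\mathbf y(\tau)+hw)$ in order to bound the competitor from below, but semiconcavity of $u$ only furnishes \emph{upper} bounds; as written the inequality points the wrong way and does not yield $\langle q,\dot{\mathbf y}(\tau)\rangle\le\sup_{p}\langle q,H_p(\mathbf y(\tau),p)\rangle$. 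The route in \cite{Cannarsa_Cheng3} is different: one uses that $p_t:=D_yA_t(x,\mathbf y(t))\in D^+u(\mathbf y(t))$ depends continuously on $t$ (from the $C^{1,1}$ regularity of $A$ and the Lipschitz continuity of $\mathbf y$) and shows directly that the right derivative $\dot{\mathbf y}^+(\tau)$ exists for every $\tau$ and equals $H_p(\mathbf y(\tau),p_\tau)$, which already lies in $H_p(\mathbf y(\tau),D^+u(\mathbf y(\tau)))$ without invoking the convex hull. Your ``hard part'' paragraph correctly senses that something nontrivial is needed, but the mechanism you propose does not close.
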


\begin{Rem}\label{rem:manifold}
	It is worth pointing out that the idea and the technique used here for the global propagation results are still valid for manifold case because of the local nature. So, by using local chart, one can adapt to any compact manifold $M$ definitely. For a rigorous treatment of semiconcavity on manifold using local charts, the readers can refer to \cite{Fathi_Figalli2010}.
\end{Rem}

\subsection{Topology of $\CUT$ and $\SING$}\label{sec:topology}

In this section, we suppose that $M$ is a $C^2$ closed manifold and $L$ is a Tonelli Lagrangian. Notice the fact that $\SING\subset\CUT\subset M\setminus\IU$, and $\SING\subset\CUT\subset\BSING$.

\begin{Lem}[\cite{Cannarsa_Cheng_Fathi2017}]\label{main_lemma}
There exists some $t>0$, and a (continuous) homotopy $F:
M\times [0, t]\to M$, with the following properties:
\begin{enumerate}[\rm (a)]
  \item for all $x\in M$, we have $F(x,0)=x$;
  \item if $F(x,s)\not\in\SING$, for some $s>0$, and $x\in M$, then the curve $\sigma\mapsto F(x,\sigma)$ is $u$-calibrating on $[0,s]$;
  \item if there exists a $u$-calibrating curve $\gamma:[0,s]\to M$, with $\gamma(0)=x$, then $\sigma\mapsto F(x,\sigma)=\gamma(\sigma)$, for every $\sigma\in [0,\min(s,t)]$.
\end{enumerate}
\end{Lem}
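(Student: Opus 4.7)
The plan is to take $t := t_0$ from Proposition \ref{maximizer_singular} and define
\begin{equation*}
F(x,s) := \begin{cases} x, & s = 0,\\ y_{s,x}, & s \in (0,t], \end{cases}
\end{equation*}
where $y_{s,x}$ is the unique maximizer of the positive barrier $\phi^x_s(y) = u(y) - A_s(x,y)$ provided by Proposition \ref{maximizer_singular}. Working in local charts as in Remark \ref{rem:manifold}, the strict concavity of $\phi^x_s$ on the ball $\overline{B}(x,(1+\lambda_0)s)$ together with the interior bound $|y_{s,x} - x| \leqslant \lambda_0 s$ from Proposition \ref{weak_KAM} makes the definition global on the compact manifold $M$. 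Property (a) is immediate from the definition.

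For property (c), suppose $\gamma : [0,s] \to M$ is $u$-calibrating with $\gamma(0) = x$. Then for every $\sigma \in (0,s]$, the restriction $\gamma\vert_{[0,\sigma]}$ saturates the inequality $u \prec L + c_H[0]$ and is itself a minimizer for $A_\sigma(x,\gamma(\sigma))$, so $\phi^x_\sigma(\gamma(\sigma)) = u(x) + c_H[0]\sigma$. Since this value is a uniform upper bound for $\phi^x_\sigma$, the point $\gamma(\sigma)$ lies in $\arg\max \phi^x_\sigma$; by the uniqueness of the maximizer guaranteed for $\sigma \leqslant t$, we conclude $\gamma(\sigma) = F(x,\sigma)$ for every $\sigma \in [0,\min(s,t)]$.

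For property (b), assume $F(x,s) = y_{s,x} \notin \SING$, so $u$ is differentiable at $y_{s,x}$. Repeating the differentiation argument in the proof of Proposition \ref{maximizer_singular} gives $Du(y_{s,x}) = D_y A_s(x,y_{s,x})$. By Proposition \ref{weak_KAM}, there is a backward $u$-calibrating curve $\gamma_x$ ending at $y_{s,x}$ whose terminal momentum is precisely $Du(y_{s,x})$; by Proposition \ref{C11_A_t}, the unique minimizer $\xi_{s,x}:[0,s]\to M$ joining $x$ to $y_{s,x}$ also has terminal momentum $D_y A_s(x,y_{s,x})$. Uniqueness of the Euler--Lagrange flow under prescribed terminal position and momentum forces $\xi_{s,x} = \gamma_x\vert_{[s-s_0,\, s]}$ up to time reparametrization on $[0,s]$, so $\xi_{s,x}$ is $u$-calibrating. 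Applying (c) to $\xi_{s,x}$ now yields $F(x,\sigma) = \xi_{s,x}(\sigma)$ on $[0,s]$, giving (b).

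The main obstacle is the continuity of $F$. On $M \times (0,t]$, joint continuity follows from a standard compactness-plus-uniqueness argument: continuity of $(s,x,y) \mapsto \phi^x_s(y)$ away from $s=0$ together with the confinement of maximizers to $\overline{B}(x,\lambda_0 s)$ implies that any limit point of $y_{s_n,x_n}$ along $(x_n,s_n) \to (x_\infty,s_\infty)$ with $s_\infty > 0$ is a maximizer of $\phi^{x_\infty}_{s_\infty}$, hence coincides with $y_{s_\infty,x_\infty}$ by uniqueness. The delicate point is continuity across the initial slice $s = 0$: here one invokes the quantitative estimate $|F(x,s) - x| = |y_{s,x} - x| \leqslant \lambda_0 s$ from Proposition \ref{weak_KAM}, which by compactness of $M$ forces $F(\cdot,s) \to \mathrm{id}_M$ uniformly as $s \to 0^+$ and thus extends continuity to all of $M \times [0,t]$.
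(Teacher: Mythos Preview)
The paper does not give its own proof of this lemma; it is quoted from \cite{Cannarsa_Cheng_Fathi2017}, and the only indication of the argument is the sentence following the statement, which says that the homotopy is realized by the maximizer arc $\mathbf{y}_x(t)$ of \eqref{maximizer_arc}. Your proposal is exactly this construction, and the verification of (a)--(c) together with continuity is carried out correctly along the lines of Proposition~\ref{maximizer_singular} and Proposition~\ref{weak_KAM}.

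Two small cosmetic points. In (c), whether the maximal value of $\phi^x_\sigma$ equals $u(x)$ or $u(x)+c_H[0]\sigma$ depends on whether $A_t$ is built from $L$ or from $L+c_H[0]$; in this paper the barrier functions in Proposition~\ref{weak_KAM} are taken with respect to $L+c_H[0]$, so the value is simply $u(x)$. In (b), the symbol $s_0$ in ``$\gamma_x\vert_{[s-s_0,\,s]}$'' is undefined; you mean the last time-$s$ segment of the backward calibrating curve, reparametrized to $[0,s]$. Neither point affects the substance of the argument.
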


Lemma \ref{main_lemma} claims that $\mathbf{y}_x(t)$ defined in \eqref{maximizer_arc} establishs a continuous homotopy on $M$, which leads to the following topological properties of $\CUT$ and $\SING$ in the homotopy sense.

\begin{Pro}[\cite{Cannarsa_Cheng_Fathi2017}]\label{homotopy_1}
The inclusion $\Sigma(u)\subset\CUT\subset\BSING\cap(M\setminus\IU)\subset M\setminus\IU$ are all homotopy equivalences.	
\end{Pro}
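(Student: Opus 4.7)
The plan is to use the homotopy $F:M\times[0,t]\to M$ provided by Lemma~\ref{main_lemma} simultaneously as a deformation retraction of each of the four spaces in the chain onto $\SING$, and then to invoke the two-out-of-three property to conclude that every inclusion in the chain is a homotopy equivalence.

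First, I would establish the invariance properties of $F$. Proposition~\ref{maximizer_singular} gives $F(x,s)=\mathbf{y}_x(s)\in\SING$ for every $x\in\CUT$ and every $s\in(0,t]$; since $\SING\subset\CUT$, this already shows that $F$ maps both $\SING\times(0,t]$ and $\CUT\times(0,t]$ into $\SING$. Next, the complement $M\setminus\IU$ is invariant under $F$: a point $x\notin\IU$ cannot flow into $\IU$, for otherwise property~(c) of Lemma~\ref{main_lemma}, applied to a global $u$-calibrating curve through $F(x,s)$, would allow one to pull that curve backward along $\sigma\mapsto F(x,\sigma)$ and exhibit a global calibrating curve through $x$. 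Combining the two invariances, $F$ also preserves $\BSING\cap(M\setminus\IU)$.

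The main step is to prove that $F(x,t)\in\SING$ for every $x\in M\setminus\IU$. Suppose to the contrary that $F(x,t)\notin\SING$. By property~(b) of Lemma~\ref{main_lemma} the curve $\sigma\mapsto F(x,\sigma)$ is then $u$-calibrating on $[0,t]$, and in particular $x\notin\SING$ (otherwise Proposition~\ref{maximizer_singular} would force $F(x,t)\in\SING$). Iterating the time-$t$ map, using property~(c) to identify successive segments of $F$ with the continuation of the calibrating curve, and using the single-valuedness of the forward characteristic $H_p(y,Du(y))$ at each intermediate differentiability point $y$ to glue segments smoothly, one produces a forward $u$-calibrating ray from $x$ defined on $[0,+\infty)$. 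Since $x$ is a differentiability point, the weak KAM theory (Proposition~\ref{weak_KAM}) supplies a backward $u$-calibrating ray on $(-\infty,0]$ ending at $x$, and the two rays concatenate into a global $u$-calibrating curve through $x$, forcing $x\in\IU$ and yielding the desired contradiction.

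With these ingredients in hand, $r:=F(\cdot,t)$ is a continuous map of each of the four spaces into $\SING$, and $F$ itself furnishes a homotopy from the identity on any of the larger sets to $i\circ r$, as well as a homotopy from the identity on $\SING$ to $r\circ i$, where $i$ denotes the relevant inclusion. Hence every inclusion $\SING\hookrightarrow X$ with $X\in\{\CUT,\,\BSING\cap(M\setminus\IU),\,M\setminus\IU\}$ is a homotopy equivalence, and the two remaining inclusions in the chain follow by the two-out-of-three property. The hard part will be the iterative gluing in the main step: ensuring that the concatenation of successive time-$t$ segments really is globally $u$-calibrating depends on $u$ being differentiable at every gluing point and on the uniqueness of forward extremals there, which must be verified carefully at each iteration of $F(\cdot,t)$.
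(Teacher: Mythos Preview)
Your strategy of using $F$ from Lemma~\ref{main_lemma} as a deformation is exactly the mechanism the paper points to (and is the one used in \cite{Cannarsa_Cheng_Fathi2017}); the argument for the inclusion $\SING\hookrightarrow\CUT$ is correct as written, since Proposition~\ref{maximizer_singular} indeed gives $F_t(\CUT)\subset\SING$. The problem is your ``main step'': the claim that $F(x,t)\in\SING$ for every $x\in M\setminus\IU$ is \emph{false}, and the iteration you sketch does not rescue it. Take a mechanical system on $\mathbb T^1$ with a single maximum of $V$ at $0$; then $\IU=\{0\}$, $\SING=\{1/2\}$, and for $x$ close to $0$ the unique forward calibrating curve from $x$ needs an arbitrarily long time to reach $1/2$. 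For such $x$ one has $F(x,t)=\gamma_x(t)\notin\SING$, so $F_t$ does \emph{not} send $M\setminus\IU$ into $\SING$.

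Your iteration argument breaks precisely here: from $F(x,t)\notin\SING$ you correctly get a calibrating segment on $[0,t]$, but there is no reason the next iterate $F(F(x,t),t)$ should also avoid $\SING$; in the example above the iterates march along $\gamma_x$ until some $k$ with $kt<T(x)\le(k+1)t$, at which point $F_t^{k+1}(x)\in\SING$ and the process stops, producing only a \emph{finite} forward calibrating arc of length $T(x)<\infty$, not a ray on $[0,+\infty)$. So no contradiction with $x\notin\IU$ is obtained. The correct route (as in \cite{Cannarsa_Cheng_Fathi2017}) is to first extend $F$ to a global homotopy $G:M\times[0,\infty)\to M$ by iterating $F_t$, observe that for $x\notin\IU$ the maximal forward calibrating time $\tau(x)$ is finite and that $G(x,s)\in\SING$ for $s>\tau(x)$, check that $\tau$ is upper semicontinuous on $M\setminus\IU$, and then reparametrize time by a continuous function dominating $\tau$ to obtain an honest deformation of $M\setminus\IU$ into $\SING$. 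The middle space $\BSING\cap(M\setminus\IU)$ is then handled by the two-out-of-three property once the two outer inclusions are known.
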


The result above can be regarded as an extension of the main result in \cite{ACNS2013} to the context of weak KAM theory.

\begin{Pro}[\cite{Cannarsa_Cheng_Fathi2017}]\label{homotopy_2}
The spaces $\SING$, and $\CUT$ are locally contractible, i.e. for every $x\in\SING$ (resp. $x\in\CUT$) and every neighborhood $V$ of $x$ in $\SING$ (resp. $\CUT$), we can find a neighborhood $W$ of $x$ in $\SING$ (resp. $\CUT$), such that $W\subset V$, and $W$ in null-homotopic in $V$.

Therefore $\SING$, and $\CUT$ are locally path connected.	
\end{Pro}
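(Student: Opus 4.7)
The plan is to combine the continuous homotopy $F$ of Lemma~\ref{main_lemma} with the homotopy equivalence in Proposition~\ref{homotopy_1}, in order to pull back the local contractibility of the open manifold $M\setminus\IU$ to the subspaces $\SING$ and $\CUT$. The key preliminary observation, direct from Proposition~\ref{maximizer_singular}, is that $F$ restricts to continuous self-homotopies $F:\SING\times[0,t_0]\to\SING$ and $F:\CUT\times[0,t_0]\to\CUT$ whose initial map is the identity. From the construction underlying Proposition~\ref{homotopy_1} one extracts a continuous retraction $r:M\setminus\IU\to\SING$ (resp.\ $r:M\setminus\IU\to\CUT$) defined, for $y\in M\setminus\IU$, by following the forward calibrating curve $s\mapsto F(y,s)$ until the first time $T(y)\geqslant 0$ at which it enters $\SING$ (resp.\ $\CUT$), and setting $r(y):=F(y,T(y))$; in particular $r$ is the identity on $\SING$ (resp.\ $\CUT$), and $r(x_0)=x_0$ at the distinguished point $x_0$.

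Given $x_0\in\SING$ and an open neighborhood $V$ of $x_0$ in $\SING$, I would first write $V=V^\sharp\cap\SING$ for some open set $V^\sharp\subset M$, and then use continuity of $r$ at $x_0$ to find an open neighborhood $U$ of $x_0$ in $M\setminus\IU$ with $r(U)\subset V$. Since $M$ is a $C^2$ manifold, $U$ is locally contractible, so one can choose an open neighborhood $W^\sharp\subset U$ of $x_0$ in $M$ together with a continuous contraction $G:W^\sharp\times[0,1]\to U$ satisfying $G(\cdot,0)=\mathrm{id}$ and $G(\cdot,1)\equiv x_0$. Setting $W:=W^\sharp\cap\SING$ and
\[
H(y,\tau):=r\bigl(G(y,\tau)\bigr),\qquad (y,\tau)\in W\times[0,1],
\]
the map $H$ takes values in $V$, satisfies $H(y,0)=r(y)=y$ for every $y\in W\subset\SING$, and $H(y,1)=r(x_0)=x_0$; this is the required null-homotopy. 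The argument for $\CUT$ is identical, and the local path-connectedness of $\SING$ and $\CUT$ follows from local contractibility by a standard topological argument (the path from $y$ to $x_0$ being simply $\tau\mapsto H(y,\tau)$).

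I expect the main obstacle to be the continuity of the retraction $r$ at points of $\SING$ (resp.\ $\CUT$). At such a point $y_0$ one has $T(y_0)=0$, so continuity amounts to showing that $T(y_n)\to 0$ and $F(y_n,T(y_n))\to y_0$ for every sequence $y_n\to y_0$ in $M\setminus\IU$. The delicate point here is that $\SING$ need not be closed, which rules out the most direct semicontinuity arguments; one must instead exploit Proposition~\ref{maximizer_singular}, property~(b) of Lemma~\ref{main_lemma}, and the uniqueness of calibrating extensions in order to preclude the forward curves $F(y_n,\cdot)$ from staying in $M\setminus\SING$ for a uniform positive amount of time. This continuity statement is essentially the content of the retraction half of Proposition~\ref{homotopy_1}, and once it is granted, the null-homotopy constructed above finishes the proof.
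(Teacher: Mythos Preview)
The survey paper does not actually contain a proof of this proposition; it merely quotes the result from \cite{Cannarsa_Cheng_Fathi2017}. So there is no ``paper's own proof'' to compare against here. That said, your strategy---pull back the local contractibility of the open manifold $M\setminus\IU$ through a continuous map into $\SING$ (resp.\ $\CUT$)---is the right idea, and is in the spirit of the argument in \cite{Cannarsa_Cheng_Fathi2017}. However, your implementation through the hitting-time retraction has two genuine problems beyond the one you flag.

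First, your map $r(y)=F(y,T(y))$ does not land in $\SING$. For $y\notin\CUT$ the forward calibrating curve terminates at a point of $\CUT$, which may lie in $\CUT\setminus\SING$ (a conjugate, non-singular cut point); and for $y\in\CUT\setminus\SING$ you get $r(y)=y\notin\SING$. So as written $r$ retracts onto $\CUT$, not $\SING$, and your null-homotopy $H=r\circ G$ for the $\SING$ case takes values only in $\CUT$. Second, even for $\CUT$, you need $r$ continuous on all of $U$ (so that $r\circ G$ is continuous), not merely at $x_0$. This requires the cut-time function $T$ to be continuous on $U$. Upper semicontinuity of $T$ is easy (limits of calibrated curves are calibrated), but lower semicontinuity at points $y_0\notin\CUT$ is delicate precisely because $\CUT$ is not closed: one can have $F(y_n,T(y_n))\in\CUT$ converging to $F(y_0,T^*)\notin\CUT$ with $T^*<T(y_0)$, and nothing you have written rules this out.

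Both issues disappear if, instead of the hitting-time map, you use the \emph{fixed-time} map $F_\tau:=F(\cdot,\tau)$ for a small $\tau>0$. Since $T$ is upper semicontinuous and $T(x_0)=0$, there is an open ball $U\ni x_0$ in $M$ with $T<\tau$ on $U$; then Lemma~\ref{main_lemma}(b) forces $F_\tau(U)\subset\SING$. Shrinking $U$ and $\tau$ so that $F(U\times[0,\tau])\subset\tilde V$ (continuity of $F$), set $W=U\cap\SING$ and concatenate two homotopies inside $V=\tilde V\cap\SING$: first $(y,s)\mapsto F(y,s\tau)$, which carries $\text{id}_W$ to $F_\tau|_W$ while staying in $\SING$; then $(y,s)\mapsto F_\tau\big((1-s)y+sx_0\big)$ (in a chart), which contracts $F_\tau|_W$ to the point $F_\tau(x_0)\in\SING$. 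This is essentially the argument of \cite{Cannarsa_Cheng_Fathi2017}, and it sidesteps entirely the continuity of $T$ away from $\CUT$.
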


\subsection{Dynamics of generalized characteristic semi-flow}\label{sec:dynamic}

In this section, we have decided to concentrate on the important example of mechanical systems on the torus $\T^n$. The main reason is that the uniqueness of the solution to \eqref{generalized_characteristics} is not guaranteed for general Hamiltonians. So, the associated semiflow may fail to be well defined. But, such a semiflow is well defined for mechanical systems. We say a Hamiltonian $H$ {\em has the uniqueness property} if there exists a unique generalized characteristic staring from any given initial point.

Let us consider the {\em mechanical Hamiltonian} in the following form:
\begin{equation}\label{eq:mech_intro}
	H(x,p)=\frac 12\langle A(x)p,p\rangle+V(x),\quad x\in\R^n, p\in\R^n
\end{equation}
where $x\mapsto A(x)$ is a $\T^n$-periodic $C^2$-smooth map taking values in the real space of $n\times n$ positive definite symmetric matrices, and $V$ is a $\T^n$-periodic function on $\R^n$, at least of class $C^2$, satisfying $\max_{x\in\R^n}V(x)=0$. A typical Hamiltonian $H$ having the uniqueness property is a mechanical Hamiltonian as in \eqref{eq:mech_intro}.

For any $c\in\R^n$, let
\begin{align*}
	H^c(x,p):=H(x,c+p)-\alpha_H(c),\quad (x,p)\in\T^n\times(\R^n)^*,
\end{align*}
where $\alpha_H(\cdot)$ is Mather's $\alpha$-function. Let $u_c$ be a $\T^n$-periodic weak KAM solution of the Hamilton-Jacobi equation
\begin{equation}\label{intr_HJ}
	H^c(x,Du_c(x))=0,\quad x\in \T^n.
\end{equation}
Here we also looks $u_c$ as a real-valued function on $\T^n$. We also define
\begin{equation}\label{intro:v}
v_c(x):=\langle c,x\rangle+u_c(x),\quad x\in\R^n.
\end{equation}

The following proposition, based on the results in \cite{Cannarsa_Cheng_Zhang2014,Cannarsa_Cheng3,Cannarsa_Yu2009,Albano_Cannarsa2002}, is a collection of properties of the generalized characteristics associated with the pair $\{H,v_c\}$ with $H$ as in \eqref{eq:mech_intro}.

\begin{Pro}\label{properties_g_c}
Let $H$ be a $\T^n$-periodic Hamiltonian as in \eqref{eq:mech_intro}, let $c\in\R^n$, and let $u_c$ be a viscosity solution of  equation \eqref{intr_HJ}. Then $v_c(x)$ introduced in \eqref{intro:v} has the following properties. Fix any $x\in\R^n$.
\begin{enumerate}[\rm (a)]
  \item There is a unique Lipschitz arc $\mathbf{x}:[0,+\infty)\to\R^n$ such that
\begin{equation}
\label{eq:genchar}
\dot{\mathbf{x}}(s)\in A(\mathbf{x}(s))D^+v_c(\mathbf{x}(s))
\end{equation}
and $\mathbf{x}(0)=x$. Moreover, denoting by $\mathbf{y}$ the solution  of \eqref{eq:genchar} starting from any other point $y\in\R^n$, we have that
	$$
	|\mathbf{x}(s)-\mathbf{y}(s)|\leqslant C|x-y|, \quad s\in[0,\tau]
	$$
for some constant $C\geqslant 0$ depending on $\tau>0$.
\item If $x\in\mbox{\rm Sing}\, (v_c)$, then $\mathbf{x}(s)\in\mbox{\rm Sing}\, (v_c)$ for all $s\in [0,+\infty)$.	
   \item The right derivative $\dot{\mathbf{x}}^+(s)$ exists for all $s\in[0,+\infty)$ and
\begin{equation*}
\label{generalized_characteristics_mech_sys}
\dot{\mathbf{x}}^+(s)=A(\mathbf{x}(s))p(s),\qquad\forall s\in[0,+\infty),
\end{equation*}
where $p(s)$ is the unique point of $D^+v_c(\mathbf{x}(s))$ such that
\begin{equation}\label{minimality}
\langle A(\mathbf{x}(s))p(s),p(s)\rangle=\min_{p\in D^+v_c(\mathbf{x}(s))}\langle A(\mathbf{x}(s))p,p\rangle.
\end{equation}
Moreover, $\dot{\mathbf{x}}^+(s)$ is right-continuous.
   \item The right derivative of $v_c(\mathbf{x}(\cdot))$ has the following representation:
   \begin{equation}
   	\frac d{ds^+}v_c(\mathbf{x}(s))=\langle p(s),A(\mathbf{x}(s))p(s)\rangle,\quad s\in[0,+\infty).
   \end{equation}
   \item If $\alpha(c)>0$  and $\dot{\mathbf{x}}^+(s)\not=0$ for all $s\in [0,\tau]$, then
\begin{equation}\label{eq:increasing}
v(\mathbf{x}(s_1))<v(\mathbf{x}(s_2)),\quad\ \text{for all}\quad 0\leqslant s_1<s_2\leqslant\tau.
\end{equation}
   \item If $x$ is not a critical point of $v_c$, then $\mathbf{x}$ is injective on $[0,\tau]$ for some $\tau>0$.
   \item If $\alpha(c)>0$, then all the critical points of $v_c$ are contained in $\mbox{\rm Sing}\,(v_c)$.
 \end{enumerate}
\end{Pro}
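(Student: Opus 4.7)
The plan is to exploit the special structure of the mechanical Hamiltonian: since $H_p(x,p)=A(x)p$ is linear in $p$, the right-hand side of \eqref{generalized_characteristics} is already convex and equals $A(\mathbf{x}(s))D^+v_c(\mathbf{x}(s))$, so \eqref{eq:genchar} coincides with the general generalized-characteristic inclusion. Combined with the global Lipschitz semiconcavity of $v_c$, which follows from Proposition~\ref{weak_KAM} via the definition \eqref{intro:v}, this places us in the framework of \cite{Cannarsa_Yu2009} and \cite{Cannarsa_Cheng_Zhang2014}. I would therefore first construct the single-valued selection implicit in the multivalued field, and then read off (a)--(g) one at a time.

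For part (a), define for each $y\in\R^n$ the vector $p^*(y)\in D^+v_c(y)$ realizing
\begin{equation*}
\min_{p\in D^+v_c(y)}\langle A(y)p,p\rangle;
\end{equation*}
existence and uniqueness are immediate since $D^+v_c(y)$ is nonempty, compact, convex, while the functional is strictly convex in $p$ because $A(y)$ is positive definite. Set $F(y)=A(y)p^*(y)$ and solve $\dot{\mathbf{x}}=F(\mathbf{x})$. The key estimate is a one-sided Lipschitz bound
\begin{equation*}
\langle A(x)p_x-A(y)p_y,\,x-y\rangle\leqslant C|x-y|^2\quad(p_x\in D^+v_c(x),\ p_y\in D^+v_c(y)),
\end{equation*}
which combines the Lipschitz regularity of $A$, the boundedness of $D^+v_c$, and the almost-monotonicity of the superdifferential coming from the semiconcavity of $v_c$. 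A Gronwall-type argument applied to $|\mathbf{x}(s)-\mathbf{y}(s)|^2$ then yields both uniqueness and the Lipschitz dependence on the initial datum.

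Parts (b)--(d) follow once (a) is in hand. For (b), if $\mathbf{x}(s_0)$ were a differentiability point of $v_c$ for some $s_0>0$, the trajectory would coincide near $s_0$ with the classical $C^2$ characteristic starting there, and by uniqueness it could be continued backward to $s=0$, contradicting the singularity of $x$. Parts (c) and (d) rest on the upper semicontinuity of $D^+v_c$ combined with the strict convexity that pins down $p(s)$: any right cluster point of the difference quotients $s^{-1}(\mathbf{x}(\sigma+s)-\mathbf{x}(\sigma))$ must itself solve the same minimization at $\mathbf{x}(\sigma)$, hence must equal $A(\mathbf{x}(\sigma))p(\sigma)$, which forces both the existence of $\dot{\mathbf{x}}^+$ and its right-continuity. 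Item (d) follows from the semiconcave chain rule as $\tfrac{d}{ds^+}v_c(\mathbf{x}(s))=\langle p(s),\dot{\mathbf{x}}^+(s)\rangle=\langle p(s),A(\mathbf{x}(s))p(s)\rangle$.

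The dynamical items (e)--(g) are corollaries of the Hamilton-Jacobi equation. At every differentiability point of $v_c$ one has $H(x,Dv_c(x))=\alpha_H(c)$; if $Dv_c(x)=0$ at such a point, then $V(x)=\alpha_H(c)$, and $V\leqslant 0$ forces $\alpha_H(c)\leqslant 0$. Thus when $\alpha_H(c)>0$ every critical point of $v_c$ must be singular, proving (g). Item (e) follows by integrating (d): when $\dot{\mathbf{x}}^+(s)=A(\mathbf{x}(s))p(s)\ne 0$, positive-definiteness of $A$ gives $p(s)\ne 0$ and hence $\tfrac{d}{ds^+}v_c(\mathbf{x}(s))>0$. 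Item (f) then follows from (e): if $x$ is non-critical the minimizer $p(0)\ne 0$, so $\dot{\mathbf{x}}^+(0)\ne 0$, and by the right-continuity in (c) this persists on some $[0,\tau]$, on which the strict increase of $v_c\circ\mathbf{x}$ forces injectivity. The main obstacle I expect is the one-sided Lipschitz estimate in (a): carefully absorbing the state-dependent matrix $A$ into the monotonicity of $D^+v_c$ requires the standard Filippov arguments to be adapted to a non-constant metric, and is what distinguishes the mechanical case treated here from the general Tonelli setting.
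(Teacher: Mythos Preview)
The paper itself does not give a proof of this proposition; it is presented as a compilation of results from \cite{Cannarsa_Cheng_Zhang2014,Cannarsa_Cheng3,Cannarsa_Yu2009,Albano_Cannarsa2002}. Your outline for parts (a) and (c)--(g) is in line with those references and is essentially correct, including the observation that the hypothesis $\alpha_H(c)>0$ is not actually used in your derivation of (e), so that (f) can be deduced from (e) as you do.

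There is, however, a genuine gap in your argument for (b). You claim that if $\mathbf{x}(s_0)$ were a point of differentiability of $v_c$, then ``by uniqueness'' the trajectory could be continued backward along the classical $C^2$ characteristic to $s=0$. But the one-sided Lipschitz estimate underlying (a) yields only \emph{forward} uniqueness: from
\[
\frac{d}{ds}|\mathbf{x}(s)-\mathbf{y}(s)|^2\leqslant 2C\,|\mathbf{x}(s)-\mathbf{y}(s)|^2
\]
one infers that two solutions agreeing at some time coincide for all \emph{later} times, not earlier ones. Concretely, let $\gamma$ be the backward $u_c$-calibrated curve with $\gamma(s_0)=\mathbf{x}(s_0)$; then $\gamma$ is another solution of \eqref{eq:genchar} on $[0,s_0]$ with the same terminal value, yet nothing you have established forbids $\gamma$ and $\mathbf{x}$ from being distinct on $[0,s_0)$ and merging at $s_0$. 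Merging of generalized characteristics is perfectly compatible with the forward Lipschitz dependence in (a). Nor can you invoke classical ODE uniqueness for $\dot y=A(y)Dv_c(y)$ backward in time, since $Dv_c$ is not two-sided Lipschitz in a full neighbourhood of a regular point in general.

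The route the paper implicitly intends (via its citations and via Propositions~\ref{maximizer_singular} and~\ref{Main_lemma_g_c}) runs in the opposite direction: from any $z\in\mbox{\rm Sing}\,(v_c)$ one first \emph{constructs} a generalized characteristic that stays in $\mbox{\rm Sing}\,(v_c)$ on a uniform interval $[0,t_0]$, and then uses the forward uniqueness in (a) to identify this singular arc with $\mathbf{x}$ restricted to $[0,t_0]$. Iterating (possible because $t_0$ is independent of the base point) gives $\mathbf{x}(s)\in\mbox{\rm Sing}\,(v_c)$ for all $s\geqslant 0$. In short, (b) is not a corollary of (a); it needs the propagation machinery of \cite{Albano_Cannarsa2002,Cannarsa_Yu2009,Cannarsa_Cheng3} as an independent input, which your sketch does not invoke.
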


For any $x\in\R^n$, we denote by $\mathbf{x}(\cdot,0,x)$ the unique generalized characteristic starting from $x$. By Proposition \ref{properties_g_c} (a) and (g), it is clear that
\begin{equation}\label{semiflow_noncmpt}
	\Phi_t(x):=\mathbf{x}(t,0,x),\quad (t,x)\in[0,\infty)\times\R^n
\end{equation}
defines a semiflow on $\R^n$. It is not difficult to see, one can also introduce the semiflow on $\T^n$ in view of Proposition \ref{maximizer_singular} and Remark \ref{rem:manifold}. We will abuse the notation $\Phi_t(x)$ for both the semiflow on $\R^n$ and $\T^n$.

\begin{Pro}[\cite{Cannarsa_Chen_Cheng2018}]
Let $\Phi_t(x)$ be the semiflow on $\R^n$ defined by the generalized characteristic determined by $v_c$ and let the following regularity condition be satisfied:\\
{\rm (R)} \ the regular values of $v_c$ are dense in $\R$.\\
Then $\mathcal{R}(\Phi_t)\subset\mbox{\rm Crit}\, (v_c)$. In particular, we have
\begin{equation}\label{eq:relation_limit_sets}
	L(\Phi_t):=\mbox{\rm cl}\, \Big(\cup\{\omega(\Phi_t,x): x\in X\}\Big)=\Omega(\Phi_t)=\mathcal{R}(\Phi_t)=\mbox{\rm Crit}\, (v_c),
\end{equation}
where $\Omega(\Phi_t)$ (resp. $\omega(\Phi_t,x)$) is the $\omega$-limit set of the semiflow $\Phi_t$ (resp. the semi-orbit $\Phi_t(x)$), and $\mathcal{R}(\Phi_t)$ is the chain-recurrent set of $\Phi_t$.
\end{Pro}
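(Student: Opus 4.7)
The plan is to use $v_c$ as a strict Lyapunov function for $\Phi_t$ and to invoke (R) to force every chain-recurrent point into $\mbox{\rm Crit}\,(v_c)$. From part (d) of Proposition \ref{properties_g_c} and the positive definiteness of $A$,
\begin{equation*}
\frac{d}{ds^+}v_c(\Phi_s(x))=\langle p(s),A(\Phi_s(x))p(s)\rangle\geqslant 0,
\end{equation*}
with equality exactly when $p(s)=0$, i.e.\ when $\Phi_s(x)\in\mbox{\rm Crit}\,(v_c)$. Hence $v_c\circ\Phi_s$ is nondecreasing, and strictly so on any arc avoiding $\mbox{\rm Crit}\,(v_c)$. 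By the uniqueness in part (a), the constant curve at a critical point is the only generalized characteristic issuing from it, so every critical point is a rest point of $\Phi_t$. The easy chain $\mbox{\rm Crit}\,(v_c)\subset L(\Phi_t)\subset\Omega(\Phi_t)\subset\mathcal{R}(\Phi_t)$ then follows from general dynamics: rest points are their own $\omega$-limits, $\omega$-limits lie in the non-wandering set, and the non-wandering set lies in $\mathcal{R}(\Phi_t)$.

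The substantive content is $\mathcal{R}(\Phi_t)\subset\mbox{\rm Crit}\,(v_c)$, which I would argue by contradiction. Fix $x_0\in\mathcal{R}(\Phi_t)\setminus\mbox{\rm Crit}\,(v_c)$ and $T>0$, and set $\delta_0:=v_c(\Phi_T(x_0))-v_c(x_0)>0$. For any $(\epsilon,T)$-chain $(x_i,t_i)_{i=0}^{n-1}$ with $x_n=x_0$, telescoping gives
\begin{equation*}
0=v_c(x_n)-v_c(x_0)=\sum_{i=0}^{n-1}\bigl[v_c(\Phi_{t_i}(x_i))-v_c(x_i)\bigr]+\sum_{i=0}^{n-1}\bigl[v_c(x_{i+1})-v_c(\Phi_{t_i}(x_i))\bigr],
\end{equation*}
in which every trajectory term is nonnegative (with the $i=0$ term $\geqslant\delta_0$) and each jump term has absolute value $\leqslant\mbox{\rm Lip}(v_c)\,\epsilon$. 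By itself, this only forces $n\geqslant\delta_0/(\mbox{\rm Lip}(v_c)\,\epsilon)$, which does not yet contradict chain recurrence.

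Condition (R) is what turns this imbalance into a genuine contradiction. Pick a regular value $b\in(v_c(x_0),v_c(x_0)+\delta_0/2)$; then $v_c^{-1}(b)$ is disjoint from $\mbox{\rm Crit}\,(v_c)$. By the Lyapunov property and continuity, after descending the semiflow to the compact torus $\T^n$ (legitimate by periodicity of $H$ and Remark \ref{rem:manifold}), the per-period ascent $v_c(\Phi_T(\cdot))-v_c(\cdot)$ admits a uniform positive lower bound $\delta(b)$ on a neighborhood of $v_c^{-1}(b)$. For $\epsilon<\delta(b)/(2\mbox{\rm Lip}(v_c))$, after the first step of the chain we sit strictly above $b$, and thereafter each trajectory segment raises $v_c$ by $\geqslant\delta(b)$ while each jump lowers it by $<\delta(b)/2$; the chain never returns below $b$, which contradicts $v_c(x_n)=v_c(x_0)<b$. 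The main obstacle is precisely the non-descent of $v_c$ to $\T^n$ caused by the linear piece $\langle c,x\rangle$: the uniform per-period ascent bound must be obtained after projecting to $\T^n$, whereas the Lyapunov book-keeping takes place in the lift, and the jump estimates must correctly track the ambiguity of $v_c$ modulo $\langle c,\Z^n\rangle$. Handling this descent/ambiguity issue is the main technical step carried out in \cite{Cannarsa_Chen_Cheng2018}.
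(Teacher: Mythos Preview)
The paper itself does not contain a proof of this proposition: it is a survey, and the result is only quoted from \cite{Cannarsa_Chen_Cheng2018}. So there is no argument in the present paper to compare your sketch against.

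On the substance of your sketch: the strategy is the right one --- $v_c$ is a Lyapunov function by Proposition~\ref{properties_g_c}(d), critical points are rest points, and the chain $\mbox{\rm Crit}\,(v_c)\subset L(\Phi_t)\subset\Omega(\Phi_t)\subset\mathcal{R}(\Phi_t)$ is standard --- but the argument you give for $\mathcal{R}(\Phi_t)\subset\mbox{\rm Crit}\,(v_c)$ is not complete, as you yourself note. Two concrete points: (i) The uniform lower bound $\delta(b)$ does not follow from the compactness of $\T^n$ in the way you suggest. The ascent function $g(x)=v_c(\Phi_T(x))-v_c(x)$ is $\Z^n$-periodic and so descends to $\T^n$, but the level set $v_c^{-1}(b)$ is \emph{not} $\Z^n$-invariant when $c\neq 0$, and the closure of $\pi(v_c^{-1}(b))$ in $\T^n$ may well meet $\pi(\mbox{\rm Crit}\,(v_c))$; then $\inf_{v_c^{-1}(b)}g=0$ and your barrier leaks. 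Condition (R) alone does not obviously rule this out. (ii) The sentence ``thereafter each trajectory segment raises $v_c$ by $\geqslant\delta(b)$'' is too strong: that bound is only available in a neighbourhood of the level set, not everywhere above it; far from $v_c^{-1}(b)$ the chain can linger near $\mbox{\rm Crit}\,(v_c)$ and gain arbitrarily little per step. The correct barrier argument only needs the weaker statement that an $(\epsilon,T)$-chain cannot recross level $b$ downward once it is above $b+\mbox{\rm Lip}(v_c)\epsilon$, and even that requires care about what happens in the strip $\{b<v_c<b+\mbox{\rm Lip}(v_c)\epsilon\}$.

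You are honest in flagging the descent/ambiguity issue and deferring it to \cite{Cannarsa_Chen_Cheng2018}; but as written, the proposal is a plausible outline rather than a proof, and the gap is exactly the one you point to.
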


For the definition of various kinds of invariant set of the semiflow $\Phi_t(x)$, the readers can refer to \cite{Robinson_book1999,Katok_Hasselblatt_book,Hurley1995}.

In fact, there is another associated semiflow on $\T^n$ defined by
\begin{equation}\label{semiflow_cmpt}
	\phi_t(x):[0,+\infty)\times\T^n\to\T^n \quad\text{and}\quad \phi_t\circ \pi=\pi\circ\Phi_t,
\end{equation}
where $\pi:\R^n\to\T^n$ is the canonical projection and $\Phi_t$ is defined in \eqref{semiflow_noncmpt}.

In view of Proposition \ref{maximizer_singular} and Remark \ref{rem:manifold}, the unique global generalized characteristics $\mathbf{x}=\mathbf{x}(t,0,x):[0,+\infty)\to\T^n$ defined in Proposition \ref{properties_g_c} can be determined inductively as
\begin{equation}\label{eq:rescaled_GC}
	\mathbf{x}(t,0,x)=\phi_{t-it_0}(\mathbf{x}(it_0,0,x)),\quad\forall t\in[it_0,(i+1)t_0],\,i\in\N.
\end{equation}
Owing to the uniqueness of generalized characteristics when $H$ has the form \eqref{eq:mech_intro}, for any $0<\tau\leqslant t_0$ we also have
	\begin{equation}\label{eq:using_tau}
		\phi_t(x)=\mathbf{x}(t,0,x)=\mathbf{y}_{\mathbf{x}(i\tau,0,x)}(t-i\tau),\quad\forall t\in[i\tau,(i+1)\tau],\,i\in\N.
	\end{equation}

For any $\tau\in(0,t_0]$, let $z^{\tau}_i=\mathbf{x}(i\tau)$, $i\in\N$, and let $\mathcal{Z}^{\tau}$ be the set of all convergent subsequences of $\{z^{\tau}_i\}$. For any strictly increasing sequence of natural numbers $\sigma=\{i_1,i_2,\ldots,i_k,\ldots\}$ and the associated convergent subsequence $z^{\tau}_{\sigma}=\{z^{\tau}_{i_k}\}$, we define
	$$
	N^{\tau}_{\sigma}=\sup\{i_{k+1}-i_k: z^{\tau}_{\sigma}\in\mathcal{Z}^{\tau}\}.
	$$
	If $\tau=t_0$, we will take out the superscript $\tau$ for abbreviation. It is clear $\mathbf{x}$ is unique, then it does not depend on the choice of $\tau\in(0,t_0]$ by \eqref{eq:using_tau}.
	
We denote the $\omega$-limit set of $\mathbf{x}$ by $\omega(\mathbf{x})$. It is clear that $\omega(\mathbf{x})\subset\overline{C(x)}$ where $C(x)$ is the connected component of $\SING$ containing $x$. For each $z\in\omega(\mathbf{x})$, there exists $\sigma$ such that the sequence $z_{\sigma}$ converges to $z$.

\begin{Pro}[\cite{Cannarsa_Chen_Cheng2018}]\label{asymptotic}
	Let $H$ be a mechanical Hamiltonian as in \eqref{eq:mech_intro} and let $u_c$ be a weak KAM solution of \eqref{intr_HJ}. Suppose $x\in\SING$, $\mathbf{x}$ is the unique generalized characteristics starting from $x$, and $C(x)$ is the connected component of $\SING$ containing $x$.
	\begin{enumerate}[\rm (a)]
	\item If $\lim_{t\to\infty}\mathbf{x}(t)$ exists, then there exists $z\in\overline{C(x)}$ such that $0\in H^c_p(z,D^+u_c(z))$.
	\item Suppose that $\lim_{t\to\infty}\mathbf{x}(t)$ does not exist and fix any $\tau\in(0,t_0]$. Then for any $z_{\sigma}\in\mathcal{Z}^{\tau}$ such that $N_{\sigma}<\infty$, there exists a closed generalized characteristic contained in $\overline{\{\mathbf{x}(t):t>0\}}\subset\overline{C(x)}$.
	\item Let $\tau_k\to 0^+$ as $k\to\infty$. If for each $k\in\N$, there exists an $\sigma_k$ such that $z^{\tau_k}_{\sigma_k}\in\mathcal{Z}^{\tau_k}$ with $N_{\sigma_k}^{\tau_k}<\infty$, and $\lim_{k\to\infty}\tau_kN_{\sigma_k}^{\tau_k}=0$, then there exists $z\in\overline{C(x)}$ such that $0\in \textup{co}\,H^c_p(z,D^+u_c(z))$.
	\item Fix any $\tau\in(0,t_0]$ and $z_{\sigma}\in\mathcal{Z}^{\tau}$. If $N_{\sigma}(\tau)=\infty$, then there exists a global generalized characteristic $\mathbf{y}:(-\infty,+\infty)\to\T^n$ such that $\{\mathbf{y}(t): t\in\R\}$ is contained in $\overline{\{\mathbf{x}(t):t>0\}}\subset\overline{C(x)}$.
	\end{enumerate}
\end{Pro}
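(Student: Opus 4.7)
My plan is to handle all four parts by a common mechanism: extract a limit of suitably shifted pieces of $\mathbf{x}$ via the uniform Lipschitz estimate in Proposition \ref{properties_g_c}(a), identify the limit as a genuine generalized characteristic through the continuous-dependence part of the same proposition, and then read off the asserted conclusion using Proposition \ref{properties_g_c}(c) together with the uniqueness property of mechanical $H$. A useful preliminary observation: because $A(z)$ is linear and $D^+v_c(z)$ convex, one has $\text{co}\,H^c_p(z,D^+u_c(z))=A(z)D^+v_c(z)$, so the conditions $0\in H^c_p(z,D^+u_c(z))$ and $0\in\text{co}\,H^c_p(z,D^+u_c(z))$ both reduce to $0\in D^+v_c(z)$; and by the minimality stated in Proposition \ref{properties_g_c}(c), the constant curve $s\mapsto z$ can be a generalized characteristic only when $0\in D^+v_c(z)$.

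For (a), set $\mathbf{x}_n(s):=\mathbf{x}(n+s)$ on a fixed interval $[0,s_0]$. Since $\mathbf{x}_n(0)\to z$, Proposition \ref{properties_g_c}(a) gives $\mathbf{x}_n(s)\to\Phi_s(z)$ uniformly on $[0,s_0]$; but also $\mathbf{x}_n(s)\to z$ pointwise, so the unique characteristic from $z$ satisfies $\Phi_\cdot(z)\equiv z$, and the preliminary observation yields $0\in D^+v_c(z)$.

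For (b), write $\gamma_k(s):=\mathbf{x}(i_k\tau+s)$ on $[0,(i_{k+1}-i_k)\tau]$. The durations belong to the finite set $\{\tau,2\tau,\ldots,N_\sigma\tau\}$, so along a subsequence the duration is a common $T$, and by Proposition \ref{properties_g_c}(a) the $\gamma_k$ converge uniformly on $[0,T]$ to $\Phi_\cdot(z)$, where $z$ is the limit of $z_\sigma$. Since $\gamma_k(T)=\mathbf{x}(i_{k+1}\tau)\to z$ as well, we obtain $\Phi_T(z)=z$; by the semigroup property $\Phi_\cdot(z)$ is then a periodic (closed) generalized characteristic, and its image lies in $\overline{C(x)}$ by Proposition \ref{properties_g_c}(b). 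Part (d) is analogous: choose $k_m$ with $i_{k_m+1}-i_{k_m}\to\infty$, shift the time origin to $i_{k_m}\tau$, and use a diagonal Arzelà--Ascoli argument together with the continuous-dependence estimate of Proposition \ref{properties_g_c}(a) on each interval $[-N,N]$ to obtain a Lipschitz $\mathbf{y}:\R\to\T^n$ which coincides with $\Phi_{\cdot-a}(\mathbf{y}(a))$ on every subinterval $[a,b]$ and is hence a generalized characteristic globally; its range is contained in $\overline{\{\mathbf{x}(t):t>0\}}\subset\overline{C(x)}$ by construction.

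Part (c) is the most delicate and is where I expect the real work to lie, since the loop extraction of (b) collapses when $T_k:=N^{\tau_k}_{\sigma_k}\tau_k\to 0$; instead, one must show that a limit point $z^*$ of $\{z_k\}$ is itself a fixed point of the semiflow. The geometric key is that, because the convergent subsequence $\{\mathbf{x}(i_{k,\ell}\tau_k)\}_\ell$ has successive time-gaps bounded by $T_k$ and accumulates at $z_k$, for any prescribed $s_0>0$ one can select $j_k\to\infty$ so that every point $\mathbf{x}(i_{k,j_k}\tau_k+s)$ with $s\in[0,s_0]$ lies within $\operatorname{Lip}(\mathbf{x})\,T_k+o(1)$ of $z_k$. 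The shifted pieces therefore converge uniformly to the constant $z^*$ on $[0,s_0]$; but by the continuous dependence of Proposition \ref{properties_g_c}(a) they also converge to $\Phi_\cdot(z^*)$, so $\Phi_s(z^*)=z^*$ on $[0,s_0]$ and, $s_0$ being arbitrary, for all $s\geq 0$. The conclusion $0\in D^+v_c(z^*)$ then follows exactly as in (a).
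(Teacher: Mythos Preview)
The paper is a survey and does not supply its own proof of this proposition; it is simply quoted with a citation to \cite{Cannarsa_Chen_Cheng2018}. So there is no in-paper argument to compare against, and your proposal must be assessed on its own merits.

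Your strategy---shift pieces of $\mathbf{x}$, pass to limits via Arzel\`a--Ascoli, and identify the limit with the semiflow through the continuous-dependence estimate of Proposition~\ref{properties_g_c}(a)---is correct and is indeed the standard route. The preliminary observation that $\text{co}\,H^c_p(z,D^+u_c(z))=A(z)D^+v_c(z)$, so that the conditions in (a) and (c) both reduce to $0\in D^+v_c(z)$, is accurate for mechanical $H$. Parts (a) and (b) are cleanly handled; the identification $\Phi_T(z)=z$ in (b) follows because both $\mathbf{x}(i_{k_m}\tau)$ and $\mathbf{x}(i_{k_m+1}\tau)$ are subsequences of the same convergent sequence $z_\sigma$.

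Two points deserve tightening. First, all four parts tacitly use a \emph{uniform} Lipschitz bound for $\mathbf{x}$ on $[0,\infty)$; this holds because $\dot{\mathbf{x}}^+(s)=A(\mathbf{x}(s))p(s)$ with $p(s)\in c+D^+u_c(\mathbf{x}(s))$ uniformly bounded on $\T^n$, but you should state it. Second, your sketch of (d) does not actually exploit $N_\sigma=\infty$: shifting by $i_k\tau$ with $i_k\to\infty$ already makes the shifted arcs defined on $[-i_k\tau,\infty)$, and the Arzel\`a--Ascoli/continuous-dependence identification on each $[a,b]$ yields a global characteristic regardless. This is not a gap in correctness---it just means the stated conclusion of (d) holds in all cases---but you should be aware that the cited paper may intend a sharper conclusion (e.g., a limit that is not merely the periodic orbit from (b)) for which unbounded gaps are genuinely needed. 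Finally, in (c) the estimate ``$\operatorname{Lip}(\mathbf{x})\,T_k+o(1)$'' is right but should be spelled out: choose $j_k$ so that $|\mathbf{x}(i_{k,\ell}\tau_k)-z_k|<1/k$ for all $\ell\ge j_k$; since consecutive sample times in $\sigma_k$ differ by at most $T_k$, every $s\in[0,s_0]$ lies within $T_k$ of some sample time $\ge i_{k,j_k}\tau_k$, whence $|\mathbf{x}(i_{k,j_k}\tau_k+s)-z_k|\le \operatorname{Lip}(\mathbf{x})T_k+1/k$.
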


\begin{Pro}[\cite{Cannarsa_Chen_Cheng2018}]
Suppose $H$ is a mechanical Hamiltonian as in \eqref{eq:mech_intro}, $u_c$ is weak KAM solution of \eqref{intr_HJ}, $x\in\SING$, and $C(x)$ is the connected component of $\SING$ containing $x$. Let $\mathbf{x}:[0,+\infty)\to\T^n$ be the unique generalized characteristic staring from $x$. If there is no critical point of $u_c$ with respect to $H$ in $\overline{C(x)}$, then $\lim_{t\to\infty}\mathbf{x}(t)$ does not exists. In addition, for any $\sigma\in \mathcal{Z}^{t_0}$,  the following properties hold:
	\begin{enumerate}[\rm (a)]
	\item If $ N_{\sigma}<\infty$, then there exists a non-constant closed generalized characteristic $\mathbf{y}:[0,T]\to\T^n$, $\mathbf{y}(0)=\mathbf{y}(T)$, contained in $\overline{\{\mathbf{x}(t):t>0\}}\subset\overline{C(x)}$. Moreover, we have either
	\begin{enumerate}[\rm (1)]
	\item $\mathbf{y}:[0,T]\to\T^n$ is a $C^2$ closed regular characteristic contained in $\mathcal{I}_{u_c}$, or
	\item $\mathbf{y}:[0,T]\to\T^n$ is a closed singular generalized characteristic.
	\end{enumerate}
	\item If $N_{\sigma}=\infty$\footnote{We can always suppose that $N_{\sigma}=\infty$ by choosing a suitable subsequence of $\sigma$ if necessary.}, then there exists a global generalized characteristic $\mathbf{y}:\R\to\T^n$ such that $\{\mathbf{y}(t): t\in\R\}$ is contained in $\overline{\{\mathbf{x}(t):t>0\}}\subset\overline{C(x)}$. Moreover, we have either
	\begin{enumerate}[\rm (1)]
	\item $\mathbf{y}:\R\to\T^n$ is a global singular generalized characteristic, or
	\item $\overline{C(x)}\cap\mathcal{I}_{u_c}\neq\emptyset$. In particular, $\overline{C(x)}\cap\mathcal{A}(H^c)\neq\emptyset$.
	\end{enumerate}
	\end{enumerate}	
\end{Pro}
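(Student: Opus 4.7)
The plan is to prove the three pieces of the statement in order: the non-existence of $\lim_{t\to\infty}\mathbf{x}(t)$, then case (a), then case (b). The first is immediate from Proposition \ref{asymptotic}(a): if the limit existed, it would produce a point $z\in\overline{C(x)}$ with $0\in H^c_p(z,D^+u_c(z))$, contradicting the hypothesis that $\overline{C(x)}$ contains no critical point of $u_c$. This also unlocks the conclusion of Proposition \ref{asymptotic}(b), which requires exactly this non-existence as a hypothesis.

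For case (a) with $N_{\sigma}<\infty$, I would apply Proposition \ref{asymptotic}(b) to extract a closed generalized characteristic $\mathbf{y}:[0,T]\to\T^n$, $\mathbf{y}(0)=\mathbf{y}(T)$, whose image lies in $\overline{\{\mathbf{x}(t):t>0\}}\subset\overline{C(x)}$. Non-constancy is automatic: a constant orbit at $z$ would force $0\in A(z)D^+v_c(z)$, i.e.\ $0\in H^c_p(z,D^+u_c(z))$ via the translation $D^+v_c(z)=c+D^+u_c(z)$, which is excluded in $\overline{C(x)}$. Next I would use the forward invariance of $\SING$ along generalized characteristics (Proposition \ref{properties_g_c}(b)), applied to the periodic extension of $\mathbf{y}$ to $\R$, to obtain the dichotomy: the set $\{s\in\R:\mathbf{y}(s)\in\SING\}$ is either empty or all of $\R$, since periodicity together with forward invariance forbids any proper non-empty subset. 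If $\mathbf{y}(\R)\subset\SING$, subcase (2) holds. Otherwise $u_c$ is differentiable along $\mathbf{y}$, Proposition \ref{properties_g_c}(c) reduces the differential inclusion to the classical Hamilton equation $\dot{\mathbf{y}}=A(\mathbf{y})Du_c(\mathbf{y})$, so $\mathbf{y}$ is a $C^2$ Hamilton orbit along which $u_c$ is calibrated; periodic extension then yields a global $u_c$-calibrating curve through each point of $\mathbf{y}$, placing $\mathbf{y}([0,T])\subset\mathcal{I}_{u_c}$, which is subcase (1).

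For case (b) with $N_{\sigma}=\infty$, I would apply Proposition \ref{asymptotic}(d) to extract a global generalized characteristic $\mathbf{y}:\R\to\T^n$ with image in $\overline{C(x)}$. If the image lies entirely in $\SING$, subcase (1) holds. Otherwise, fix $s_0$ with $\mathbf{y}(s_0)\notin\SING$; then forward invariance of $\SING$ forces $\mathbf{y}(s)\notin\SING$ for every $s\leqslant s_0$, because otherwise propagation would place $\mathbf{y}(s_0)$ back in $\SING$. On that half-line $\mathbf{y}$ is a classical $C^2$ Hamilton orbit with $p=Du_c\circ\mathbf{y}$, hence a backward $u_c$-calibrating curve with image in the compact set $\overline{C(x)}\subset\T^n$. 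The final step would invoke the standard weak KAM recurrence argument: time-translates of a relatively compact calibrating curve subconverge by Arzel\`a-Ascoli to a global calibrating curve through any $\alpha$-limit point, producing a point of $\overline{C(x)}\cap\mathcal{A}(H^c)\subset\overline{C(x)}\cap\mathcal{I}_{u_c}$, and hence subcase (2).

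The hardest and least mechanical ingredient is the very last one: converting a backward $u_c$-calibrated half-orbit trapped in the compact set $\overline{C(x)}$ into a point of the projected Aubry set $\mathcal{A}(H^c)$. It sits outside the theory of generalized characteristics used in the earlier steps and rests on a Fathi-style recurrence argument built on the domination inequality $u_c\prec L+\alpha_H(c)$ together with a careful extraction of limit curves under time-translation, where the compactness of $\T^n$ is essential. Once this tool is granted, the rest of the proof is a clean assembly of the dichotomy furnished by forward invariance of $\SING$ and the asymptotic alternatives catalogued in Propositions \ref{properties_g_c} and \ref{asymptotic}.
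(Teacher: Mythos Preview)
The paper you are working from is a survey: this proposition is quoted from \cite{Cannarsa_Chen_Cheng2018} without proof, so there is no argument in the present text to compare your attempt against. Your outline is nonetheless the natural one, since it is assembled almost entirely from Propositions~\ref{properties_g_c} and~\ref{asymptotic}, which are themselves imported from that same source, together with the standard weak KAM fact that $\alpha$-limit sets of backward calibrated curves on a compact manifold meet the projected Aubry set.

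One point deserves tightening. In case~(a), when $\mathbf{y}([0,T])\cap\SING=\emptyset$ you jump from ``$u_c$ is differentiable along $\mathbf{y}$'' to ``$\mathbf{y}$ is $u_c$-calibrated''. Differentiability alone does not exclude $\mathbf{y}(s_0)\in\CUT$, and a generalized characteristic through a differentiable point is not automatically calibrated. The clean way to close this is: if some $\mathbf{y}(s_0)\in\CUT$, then Proposition~\ref{maximizer_singular} combined with the uniqueness property of mechanical Hamiltonians forces $\mathbf{y}(s)\in\SING$ for $s>s_0$, contradicting the case hypothesis; hence $\mathbf{y}$ avoids $\CUT$ entirely, the forward calibrated extension through each $\mathbf{y}(s)$ exists and, by uniqueness of generalized characteristics, coincides with $\mathbf{y}$. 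Periodicity then gives global calibration and $\mathbf{y}\subset\mathcal{I}_{u_c}$ as you want. The analogous remark applies to the backward half-line in case~(b) before you invoke the recurrence argument. With this patch your proposal is correct.
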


\begin{Pro}[\cite{Cannarsa_Chen_Cheng2018}]
Suppose $H$ is a mechanical Hamiltonian as in \eqref{eq:mech_intro}, $x\in\SING$ and $\mathbf{x}$ is the unique generalized characteristic starting from $x$. If there exists a point of differentiability of $u_c$ in $\omega(\mathbf{x})$, then $\omega(\mathbf{x})\cap\mathcal{A}(H^c)\not=\varnothing$.	
\end{Pro}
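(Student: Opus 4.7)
The plan is to build a backward orbit $\mathbf{y}:(-\infty,0]\to\omega(\mathbf{x})$ through the given differentiability point $z$, to show that it must avoid $\SING$ in strict backward time, then to observe that it is automatically a classical $u_c$-calibrated curve, and finally to invoke Fathi's weak KAM fact that the $\alpha$-limit of such a curve sits inside the projected Aubry set $\mathcal{A}(H^c)$.

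First I would pick $t_k\to+\infty$ with $\mathbf{x}(t_k)\to z$. Since the semiflow $\phi_t$ is Lipschitz on bounded time intervals by Proposition~\ref{properties_g_c}(a), the shifted curves $\mathbf{x}^k(s):=\mathbf{x}(t_k+s)$ on $[-t_k,0]$ are uniformly Lipschitz; by Arzel\`a--Ascoli and a diagonal extraction, a subsequence converges uniformly on compact subsets of $(-\infty,0]$ to a Lipschitz curve $\mathbf{y}$ with $\mathbf{y}(0)=z$ whose image lies in $\omega(\mathbf{x})$. Continuity of $\phi_t$ then yields the semiflow identity $\phi_t(\mathbf{y}(s))=\mathbf{y}(s+t)$ whenever both $s$ and $s+t$ lie in $(-\infty,0]$.

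The key step, and the main obstacle I expect, is the following anti-propagation argument: $\mathbf{y}(s)\notin\SING$ for every $s<0$. Indeed, if $\mathbf{y}(s_0)\in\SING$ for some $s_0<0$, Proposition~\ref{properties_g_c}(b) would force $z=\mathbf{y}(0)=\phi_{-s_0}(\mathbf{y}(s_0))\in\SING$, contradicting the differentiability of $u_c$ at $z$. Consequently $u_c$ is differentiable all along $\mathbf{y}|_{(-\infty,0)}$, and Proposition~\ref{properties_g_c}(c) pins the velocity down as $\dot{\mathbf{y}}(s)=H^c_p(\mathbf{y}(s),Du_c(\mathbf{y}(s)))$. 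Continuity of $Du_c$ on the differentiability set of a semiconcave function then gives $\mathbf{y}\in C^1$, and combining the Fenchel identity with the viscosity relation $H^c(\mathbf{y}(s),Du_c(\mathbf{y}(s)))=0$ yields
\begin{equation*}
\frac{d}{ds}u_c(\mathbf{y}(s))=\langle Du_c(\mathbf{y}(s)),\dot{\mathbf{y}}(s)\rangle=L^c(\mathbf{y}(s),\dot{\mathbf{y}}(s)),
\end{equation*}
which integrates to the $u_c$-calibration identity on $(-\infty,0]$.

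To conclude, I would invoke the classical weak KAM theorem that the $\alpha$-limit set of any backward $u_c$-calibrated curve is contained in $\mathcal{A}(H^c)$, so $\alpha(\mathbf{y})\subset\mathcal{A}(H^c)$. Because $\mathbf{y}((-\infty,0])\subset\omega(\mathbf{x})$ and $\omega(\mathbf{x})$ is closed, $\alpha(\mathbf{y})\subset\omega(\mathbf{x})$ as well; compactness of $\T^n$ makes $\alpha(\mathbf{y})$ non-empty, yielding the desired intersection. Once the anti-propagation step is established the remainder reduces to standard weak KAM calculus; it is precisely in that step that the differentiability hypothesis on $z$ is used decisively.
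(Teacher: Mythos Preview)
The survey does not supply a proof of this proposition; it is stated with attribution to \cite{Cannarsa_Chen_Cheng2018}, so there is no in-paper argument to compare against. That said, your outline is sound and is very much in the spirit of the surrounding material: the extraction of a limiting orbit inside $\omega(\mathbf{x})$ via Arzel\`a--Ascoli mirrors Proposition~\ref{asymptotic}(d), the anti-propagation step is exactly Proposition~\ref{properties_g_c}(b) used contrapositively, and once $\mathbf{y}$ avoids $\SING$ on $(-\infty,0)$ the right-derivative formula of Proposition~\ref{properties_g_c}(c) together with upper semicontinuity of $D^+u_c$ gives the $C^1$ regularity and the calibration identity you wrote down. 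The final appeal to Fathi's theorem that the $\alpha$-limit set of a backward $u_c$-calibrated curve lies in the projected Aubry set $\mathcal{A}(H^c)$ (not merely in $\mathcal{I}(u_c)$) is standard on compact manifolds and closes the argument.

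Two small points worth tightening. First, the semiflow identity $\phi_t(\mathbf{y}(s))=\mathbf{y}(s+t)$ is what actually certifies that $\mathbf{y}|_{[s,0]}$ is \emph{the} generalized characteristic from $\mathbf{y}(s)$; you state this but should make explicit that it is this identity, rather than mere limits of solutions, that lets you invoke (b) and (c) pointwise along $\mathbf{y}$. Second, for the chain rule step $\tfrac{d}{ds}u_c(\mathbf{y}(s))=\langle Du_c(\mathbf{y}(s)),\dot{\mathbf{y}}(s)\rangle$, you are relying on differentiability of $u_c$ at every $\mathbf{y}(s)$ together with the $C^1$ regularity of $\mathbf{y}$ just obtained; this is correct, but it is worth remarking that continuity of $Du_c$ on its differentiability set is a consequence of the upper semicontinuity of $D^+u_c$ for semiconcave functions. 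With those clarifications your argument is complete.
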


\section{On and beyond propagation of singularities}

The study of propagation of singularities of certain viscosity solutions is closed connected to many topics such as PDE, calculus of variation and optimal controls, Hamiltonian dynamical systems, geometry and so on. Based on the aforementioned results, We will raise some open problems involving propagation of singularities. The list will be much longer than the one in \cite{Chen_Cheng2016}.

\subsection{Analytic aspects}

From the proof of Proposition \ref{maximizer_singular}, we can summarize the basic idea to prove the global propagation of singularities of viscosity solutions governed by generalized characteristics of certain Hamilton-Jacobi equations as follows:
\begin{enumerate}[(i)]
	\item First, we need a representation formulae for the viscosity solutions of certain problems in the form of inf-convolution like what in \eqref{L-L regularity_inf}. That is, such a solution can be regarded as the value functions of an associated problem of calculus of variations and optimal controls.
	\item Second, we need the regularity properties of the associated fundamental solutions such as Proposition \ref{C11_A_t}.
	\item Third, an argument like the proof of Proposition \ref{maximizer_singular} using sup-convolution can be applied to get the result of propagation of singularities.
	\item Finally, we need show that the arc obtained is a generalized characteristic on a time interval $[0,t_0]$ which can be extended to $+\infty$ if we can have some uniform property of $t_0$.
\end{enumerate}

Of course, one should improve certain technique points above to deal with different kinds of problems.

\medskip

\noindent\textbf{A1.} Can technique points (i)-(iv) be applied to various type of problems?

\medskip

For example, in the preprint \cite{CCMW2018}, this method is successfully applied to the Dirichlet problem. Another example is the problem with respect to the Hamilton-Jacobi equations in the form
\begin{equation}\label{eq:HJ_contact}
	H(x,u(x),Du(x))=c,\quad x\in M.
\end{equation}
It is very hopeful to solve the global propagation result for the viscosity solution $u$ of \eqref{eq:HJ_contact} using the program above and recent works on certain contact type Hamilton-Jacobi equations (\cite{Wang_Wang_Yan2017,Wang_Wang_Yan2018,CCWY2018,Wang_Wang_Yan2018_1,Wang_Wang_Yan2018_2}).

\medskip

The only knowledge for us on a Hamiltonian having uniqueness property is a mechanical one. Undoubtedly, this property is not well understood until now. Let $H$ be a mechanical Hamiltonian as in \eqref{eq:mech_intro}, and define
\begin{equation}\label{eq:H_W}
		H_W(x,p)=H(x,p)+\langle W(x),p\rangle,
\end{equation}
where $W$ is a vector field on $M$ with its canonical dual, a differential form $\omega_x$. The Hamiltonian $H_W$ has uniqueness property if $\omega_x$ is a closed 1-form. In general, the distribution (in the sense of Frobenius) $\mathcal{D}=\text{ker}\,\omega_x$ is non-integrable. It is unclear what is the condition to let $H_W$ have uniqueness property.

On the other hand, the differential inclusion \eqref{generalized_characteristics} is also an ordinary differential equation in the sense that all the solutions satisfy the equation almost everywhere. Then, we can also study the uniqueness of generalized characteristic from arbitrary initial points in the Diperna-Lions theory point of view. Is the condition on the  $\text{div}\,W$ is what expected?

\medskip

\noindent\textbf{A2.} What is the essential conditions for a Hamiltonian $H$ having uniqueness property?

\medskip

In our construction of the global generalized characteristics, the singular arc $\mathbf{y}_x(t)$ on $[0,t_0]$ is obtained by Proposition \ref{maximizer_arc} and $t_0$ is independent of the initial point $x$.

\medskip

\noindent\textbf{A3.} Can we drop the uniformness requirement of such $t_0$ to obtain a global result?

\medskip

The uniformness of such $t_0$ holds because of our uniform conditions (L1)-(L3). At least, we hope to only use so called Fathi-Maderna conditions in \cite{Fathi_Maderna2007}.

\medskip

Recalling the alternative approach using Lasry-Lions regularization for the study of the global propagation results in \cite{Cannarsa_Cheng_Fathi2017}, and the results \cite{Chen_Cheng2016,Chen_Cheng_Zhang2018} on the relation between the Lasry-Lions regularization and propagation of singularities, the readers can find that the $C^{1,1}$ regularity result in Proposition \ref{C11_A_t} is a key technique point in the theory. But, when working on a manifold with (smooth) boundary, one may meet the difficulty to deal with the regularity property of the boundary as well as the boundary function (\cite{CCMW2018}).

\medskip

\noindent\textbf{A4.} Can one improve the program (i)-(iv) using a $C^{1,\alpha}$ ($\alpha\in(0,1)$) argument for certain problems involving state constraint?

\medskip

\noindent\textbf{A5.} How about the Lasry-Lions regularization for state constraint type problems?

\medskip

In \cite{Cannarsa_Yu2009}, local results of propagation of singularities involving any semiconcave function $u$ and any Tonelli Hamiltonian $H$ were obtained. In the same paper, the authors also proved the local propagation of singularities governed by a partial differential inclusion of generalized characteristics.

\medskip

\noindent\textbf{A6.} How about the intrinsic nature on the problem of global propagation of singularities on a pair $(u,H)$, especially at a critical point?

\medskip

\noindent\textbf{A7.} How about the intrinsic nature on the partial differential inclusions of generalized characteristics?

\subsection{Dynamic, topological and geomertic aspects}

The main problem in the dynamic aspects of the theory is to exploit the relations between the singular dynamics of generalized characteristics and the regular Hamiltonian dynamics, especially the applications to the Hamiltonian dynamical systems.

As pointed out in Section \ref{sec:dynamic}, an interesting result is the relations between the $\omega$-limit set of the relevant semiflow on $\T^n$, say $\phi_t$, and the Aubry set. It is possible that the singularities of a weak KAM solution evolute along the generalized characteristics approaching the Aubry set.

\medskip

\noindent\textbf{B1.} What is the dynamical nature of the invariant measures produced by the semiflow $\phi_t$?

\medskip

\noindent\textbf{B2.} How about the dynamic and topological structure of the supports of such invariant measures produced by the semiflow $\phi_t$?

\medskip

\noindent\textbf{B3.} Are there some finer properties on $\T^2$?

\medskip

In the study of dynamics of the semiflow $\phi_t$ for mechanical systems, there is an obstacle for the semiflow, i.e., the sets of critical points defined in \eqref{eq:critical1} (see Proposition \ref{asymptotic}). If we concentrate on the mechanical systems, this problem is closed related to the problem of Novikov's critical point theory of closed 1-forms (\cite{Farber_book}). A much more general situation is that the Hamiltonian has the form $H_W$ introduced in \eqref{eq:H_W} when $W$ is not determined by a closed 1-form.

As shown in \cite{Cannarsa_Cheng_Zhang2014}, the local propagation of singularities along a Lipschitz curve was studied for viscosity solutions and Mather's barrier functions. In \cite{Cannarsa_Cheng2}, the relations between the critical points of the barrier functions and the homoclinic orbits with respect to Aubry sets was studied. The main methods used in \cite{Cannarsa_Cheng2} is the combination of Lasry-Lions regularization with standard kernel $|x-y|^2/2t$ and the critical point theory of mountain pass type.

\medskip

\noindent\textbf{B4.} What is the nature of the existence or non-existence of the critical points?

\medskip

\noindent\textbf{B5.} Is there a curvature condition characterizing the non-existence of critical point even for the classical mechanical systems, like what used in Cheeger-Gromoll's splitting theorem in Riemannian geometry (\cite{Cheeger_Gromoll1971})?

\medskip

\noindent\textbf{B6.} Let $u$ be a weak KAM solution with respect to a Tonelli Hamiltonian $H$. Invoking problem \textbf{A6}, at a critical point with respect to $u$, how should we change $H$ to understand the further propagation of singularities of $u$? Is this a way to solve problem \textbf{A7}?

\medskip

\noindent\textbf{B7.} Can we get more information, by using the intrinsic kernel in the process of Lasry-Lions regularization for the Mather's barrier function, to obtain the dynamical results from the critical points of the barrier functions?

\medskip

Recalling the results in \cite{Cannarsa_Peirone2001}, for the distance function $d_F$ with respect to a closed subset $F\subset\R^n$, some amazing results on the asymptotic properties of the unbounded component of $\text{Sing}\,(d_F)$ were obtained. We finish the list with the following problem:

\medskip

\noindent\textbf{B8.} What is the analogy of these results and how about the extensions for weak KAM solutions?



\bibliographystyle{abbrv}
\bibliography{mybib}

\begin{thebibliography}{10}

\bibitem{Albano2016_1}
P.~Albano.
\newblock Global propagation of singularities for solutions of
  {H}amilton-{J}acobi equations.
\newblock {\em J. Math. Anal. Appl.}, 444(2):1462--1478, 2016.

\bibitem{Albano_Cannarsa2002}
P.~Albano and P.~Cannarsa.
\newblock Propagation of singularities for solutions of nonlinear first order
  partial differential equations.
\newblock {\em Arch. Ration. Mech. Anal.}, 162(1):1--23, 2002.

\bibitem{ACNS2013}
P.~Albano, P.~Cannarsa, K.~T. Nguyen, and C.~Sinestrari.
\newblock Singular gradient flow of the distance function and homotopy
  equivalence.
\newblock {\em Math. Ann.}, 356(1):23--43, 2013.

\bibitem{Bernard2007}
P.~Bernard.
\newblock Existence of {$C^{1,1}$} critical sub-solutions of the
  {H}amilton-{J}acobi equation on compact manifolds.
\newblock {\em Ann. Sci. {\'E}cole Norm. Sup. (4)}, 40(3):445--452, 2007.

\bibitem{Buttazzo_Giaquinta_Hildebrandt_book}
G.~Buttazzo, M.~Giaquinta, and S.~Hildebrandt.
\newblock {\em One-dimensional variational problems: an introduction},
  volume~15 of {\em Oxford Lecture Series in Mathematics and its Applications}.
\newblock The Clarendon Press, Oxford University Press, New York, 1998.

\bibitem{Cannarsa_Chen_Cheng2018}
P.~Cannarsa, Q.~Chen, and W.~Cheng.
\newblock Dynamic and asymptotic behavior of singularities of certain weak
  {KAM} solutions on the torus.
\newblock arXiv:1805.10637, 2018.

\bibitem{Cannarsa_Cheng2}
P.~Cannarsa and W.~Cheng.
\newblock Homoclinic orbits and critical points of barrier functions.
\newblock {\em Nonlinearity}, 28(6):1823--1840, 2015.

\bibitem{Cannarsa_Cheng3}
P.~Cannarsa and W.~Cheng.
\newblock Generalized characteristics and {L}ax-{O}leinik operators: global
  theory.
\newblock {\em Calc. Var. Partial Differential Equations}, 56(5):Art. 125, 31,
  2017.

\bibitem{Cannarsa_Cheng_Fathi2017}
P.~Cannarsa, W.~Cheng, and A.~Fathi.
\newblock On the topology of the set of singularities of a solution to the
  {H}amilton-{J}acobi equation.
\newblock {\em C. R. Math. Acad. Sci. Paris}, 355(2):176--180, 2017.

\bibitem{CCMW2018}
P.~Cannarsa, W.~Cheng, M.~Mazzola, and K.~Wang.
\newblock Global generalized characteristics for the {D}irichlet problem for
  {H}amilton-{J}acobi equations at a supercritical energy level.
\newblock preprint, arXiv:1803.01591, 2018.

\bibitem{CCWY2018}
P.~Cannarsa, W.~Cheng, K.~Wang, and J.~Yan.
\newblock Herglotz' generalized variational principle and contact type
  {H}amilton-{J}acobi equations.
\newblock preprint, arXiv:1804.03411, 2018.

\bibitem{Cannarsa_Cheng_Zhang2014}
P.~Cannarsa, W.~Cheng, and Q.~Zhang.
\newblock Propagation of singularities for weak {KAM} solutions and barrier
  functions.
\newblock {\em Comm. Math. Phys.}, 331(1):1--20, 2014.

\bibitem{Cannarsa_Mazzola_Sinestrari2015}
P.~Cannarsa, M.~Mazzola, and C.~Sinestrari.
\newblock Global propagation of singularities for time dependent
  {H}amilton-{J}acobi equations.
\newblock {\em Discrete Contin. Dyn. Syst.}, 35(9):4225--4239, 2015.

\bibitem{Cannarsa_Peirone2001}
P.~Cannarsa and R.~Peirone.
\newblock Unbounded components of the singular set of the distance function in
  {$\mathbb{R}^n$}.
\newblock {\em Trans. Amer. Math. Soc.}, 353(11):4567--4581, 2001.

\bibitem{Cannarsa_Sinestrari_book}
P.~Cannarsa and C.~Sinestrari.
\newblock {\em Semiconcave functions, {H}amilton-{J}acobi equations, and
  optimal control}, volume~58 of {\em Progress in Nonlinear Differential
  Equations and their Applications}.
\newblock Birkh{\"a}user Boston, Inc., Boston, MA, 2004.

\bibitem{Cannarsa_Yu2009}
P.~Cannarsa and Y.~Yu.
\newblock Singular dynamics for semiconcave functions.
\newblock {\em J. Eur. Math. Soc. (JEMS)}, 11(5):999--1024, 2009.

\bibitem{Cheeger_Gromoll1971}
J.~Cheeger and D.~Gromoll.
\newblock The splitting theorem for manifolds of nonnegative {R}icci curvature.
\newblock {\em J. Differential Geometry}, 6:119--128, 1971/72.

\bibitem{Chen_Cheng2016}
C.~Chen and W.~Cheng.
\newblock Lasry-{L}ions, {L}ax-{O}leinik and generalized characteristics.
\newblock {\em Sci. China Math.}, 59(9):1737--1752, 2016.

\bibitem{Chen_Cheng_Zhang2018}
C.~Chen, W.~Cheng, and Q.~Zhang.
\newblock Lasry--{L}ions approximations for discounted {H}amilton--{J}acobi
  equations.
\newblock {\em J. Differential Equations}, 265(2):719--732, 2018.

\bibitem{Clarke_Vinter1985_1}
F.~H. Clarke and R.~B. Vinter.
\newblock Regularity properties of solutions to the basic problem in the
  calculus of variations.
\newblock {\em Trans. Amer. Math. Soc.}, 289(1):73--98, 1985.

\bibitem{Crandall_Evans_Lions1984}
M.~G. Crandall, L.~C. Evans, and P.-L. Lions.
\newblock Some properties of viscosity solutions of {H}amilton-{J}acobi
  equations.
\newblock {\em Trans. Amer. Math. Soc.}, 282(2):487--502, 1984.

\bibitem{Crandall_Lions1983}
M.~G. Crandall and P.-L. Lions.
\newblock Viscosity solutions of {H}amilton-{J}acobi equations.
\newblock {\em Trans. Amer. Math. Soc.}, 277(1):1--42, 1983.

\bibitem{Farber_book}
M.~Farber.
\newblock {\em Topology of closed one-forms}, volume 108 of {\em Mathematical
  Surveys and Monographs}.
\newblock American Mathematical Society, Providence, RI, 2004.

\bibitem{Fathi_book}
A.~Fathi.
\newblock {W}eak {KAM} theorem in {L}agrangian dynamics.
\newblock Cambridge University Press, Cambridge (to appear).

\bibitem{Fathi_Figalli2010}
A.~Fathi and A.~Figalli.
\newblock Optimal transportation on non-compact manifolds.
\newblock {\em Israel J. Math.}, 175:1--59, 2010.

\bibitem{Fathi_Maderna2007}
A.~Fathi and E.~Maderna.
\newblock Weak {KAM} theorem on non compact manifolds.
\newblock {\em NoDEA Nonlinear Differential Equations Appl.}, 14(1-2):1--27,
  2007.

\bibitem{Hurley1995}
M.~Hurley.
\newblock Chain recurrence, semiflows, and gradients.
\newblock {\em J. Dynam. Differential Equations}, 7(3):437--456, 1995.

\bibitem{Katok_Hasselblatt_book}
A.~Katok and B.~Hasselblatt.
\newblock {\em Introduction to the modern theory of dynamical systems},
  volume~54 of {\em Encyclopedia of Mathematics and its Applications}.
\newblock Cambridge University Press, Cambridge, 1995.

\bibitem{Khanin_Sobolevski2016}
K.~Khanin and A.~Sobolevski.
\newblock On dynamics of {L}agrangian trajectories for {H}amilton-{J}acobi
  equations.
\newblock {\em Arch. Ration. Mech. Anal.}, 219(2):861--885, 2016.

\bibitem{Lasry_Lions1986}
J.-M. Lasry and P.-L. Lions.
\newblock A remark on regularization in {H}ilbert spaces.
\newblock {\em Israel J. Math.}, 55(3):257--266, 1986.

\bibitem{McEneaney_Dower2015}
W.~M. McEneaney and P.~M. Dower.
\newblock The principle of least action and fundamental solutions of
  mass-spring and {$N$}-body two-point boundary value problems.
\newblock {\em SIAM J. Control Optim.}, 53(5):2898--2933, 2015.

\bibitem{Robinson_book1999}
C.~Robinson.
\newblock {\em Dynamical systems: stability, symbolic dynamics, and chaos}.
\newblock Studies in Advanced Mathematics. CRC Press, Boca Raton, FL, second
  edition, 1999.

\bibitem{Stromberg_Ahmadzadeh2014}
T.~Str{\"o}mberg and F.~Ahmadzadeh.
\newblock Excess action and broken characteristics for {H}amilton-{J}acobi
  equations.
\newblock {\em Nonlinear Anal.}, 110:113--129, 2014.

\bibitem{Villani_book}
C.~Villani.
\newblock {\em Optimal transport: old and new}, volume 338 of {\em Grundlehren
  der Mathematischen Wissenschaften}.
\newblock Springer-Verlag, Berlin, 2009.

\bibitem{Wang_Wang_Yan2017}
K.~Wang, L.~Wang, and J.~Yan.
\newblock Implicit variational principle for contact {H}amiltonian systems.
\newblock {\em Nonlinearity}, 30(2):492--515, 2017.

\bibitem{Wang_Wang_Yan2018_1}
K.~Wang, L.~Wang, and J.~Yan.
\newblock {A}ubry-{M}ather and weak {KAM} theories for contact {H}amiltonian
  systems. {P}art 1: strictly increasing case.
\newblock arXiv:1801.05612, 2018.

\bibitem{Wang_Wang_Yan2018_2}
K.~Wang, L.~Wang, and J.~Yan.
\newblock {A}ubry-{M}ather and weak {KAM} theories for contact {H}amiltonian
  systems. {P}art 2: strictly decreasing case.
\newblock arXiv:1805.04738, 2018.

\bibitem{Wang_Wang_Yan2018}
K.~Wang, L.~Wang, and J.~Yan.
\newblock Variational principle for contact {H}amiltonian systems and its
  applications.
\newblock to appear in Journal de Math\'ematiques Pures et Appliqu\'ees, 2018.

\bibitem{Yu2006}
Y.~Yu.
\newblock A simple proof of the propagation of singularities for solutions of
  {H}amilton-{J}acobi equations.
\newblock {\em Ann. Sc. Norm. Super. Pisa Cl. Sci. (5)}, 5(4):439--444, 2006.

\end{thebibliography}

\end{document}